\theoremstyle{plain}
\newtheorem{thm}{Theorem}[section]
\newtheorem{lem}[thm]{Lemma}
\newtheorem{prop}[thm]{Proposition}
\theoremstyle{remark}
\theoremstyle{definition}
\newcommand\Ac{{\mathcal{A}}}
\newcommand\Cpx{{\mathbb C}}
\newcommand\Dc{{\mathcal{D}}}
\newcommand\dom{\operatorname{dom}}
\newcommand\Ec{{\mathcal{E}}}
\newcommand\eps{\epsilon}
\newcommand\HEu{{\EuScript H}}                   
\newcommand\Ints{{\mathbb Z}}
\newcommand\Lc{{\mathcal{L}}}
\newcommand\Mcal{{\mathcal{M}}}
\newcommand\meas{{\omega}}
\newcommand\measspace{{Z}}
\newcommand\Nats{{\mathbb N}}
\newcommand\Reals{{\mathbb R}}
\newcommand\Tcirc{{\mathbb T}}
\newcommand\zbar{{\overline z}}
\begin{document}

\title[Reduction theory and Brown measure]{On reduction theory and Brown measure for closed unbounded operators}

\author[Dykema]{K. Dykema$^*$}
\address{Ken Dykema, Department of Mathematics, Texas A\&M University, College Station, TX, USA.}
\email{ken.dykema@math.tamu.edu}
\thanks{\footnotesize ${}^{*}$ Research supported in part by NSF grant DMS--1202660.}
\author[Noles]{J. Noles$^*$}
\address{Joseph Noles, Department of Mathematics, Texas A\&M University, College Station, TX, USA.}
\email{jnoles@math.tamu.edu}
\author[Sukochev]{F. Sukochev$^{\S}$}
\address{Fedor Sukochev, School of Mathematics and Statistics, University of new South Wales, Kensington, NSW, Australia.}
\email{f.sukochev@math.unsw.edu.au}
\thanks{\footnotesize ${}^{\S}$ Research supported by ARC}
\author[Zanin]{D. Zanin$^{\S}$}
\address{Dmitriy Zanin, School of Mathematics and Statistics, University of new South Wales, Kensington, NSW, Australia.}
\email{d.zanin@math.unsw.edu.au}

\subjclass[2000]{47C99 (47C15)}

\keywords{Direct integral, Brown measure}

\begin{abstract}
The theory of direct integral decompositions of both bounded and unbounded operators is further developed;
in particular, results about spectral projections, functional calculus and affiliation to von Neumann algebras are proved.
For operators belonging to or affiliated to a tracial von Neumann algebra that is a direct integral von Neumann algebra, the Brown
measure is shown to be given by the corresponding integral of Brown measures.
\end{abstract}

\date{September 10, 2015}

\maketitle

\section{Introduction}

Reduction theory is a way of decomposing von Neumann algebras
as direct integrals (a generalization of direct sums) of other von Neumann algebras.
It is commonly employed,
when the direct integral decomposition is done over the center of the von Neumann algebra,
to see that an arbitrary von Neumann algebra is a direct integral of factors.
However, the direct integral decomposition can be done over any von Neumann subalgebra of the center.

Our main goal in this paper is to show that, in the context of tracial von Neumann algebras
and certain unbounded operators affiliated to such von Neumann algebras,
the Brown spectral distribution measure behaves well with respect to direct integral decompositions.
This result (Theorem~\ref{thm:decompBrown}) is a natural development and its proof is technically nontrivial.
This result finds immediate application in the paper~\cite{DSZ:unbdd},
that extends results from~\cite{DSZ} about Schur upper-triangular forms to certain unbounded operators affiliated to finite von Neumann algebras.

We will now describe some of the theory of Brown measure and the Fulgelde--Kadison determinant, on which it depends.
Given a tracial von Neumann algebra $(\Mcal,\tau)$,
by which we mean a von Neumann algebra $\Mcal$ and a normal, faithful, tracial state $\tau$,
the {\em Fuglede--Kadison determinant}~\cite{FK52} is the map $\Delta=\Delta_\tau:\Mcal\to[0,\infty)$ defined by
\[
\Delta(T)=\exp\big(\tau(\log|T|)\big):=\lim_{\eps\to0^+}\exp\big(\tau(\log|T|+\eps)\big).
\]
Fulglede and Kadison proved that it is multiplicative:  $\Delta(AB)=\Delta(A)\Delta(B)$.

The {\em Brown measure} $\nu_T$ was introduced by L.G.\ Brown~\cite{B86}.
It is a sort of spectral distribution measure for elements $T\in\Mcal$ (and for certain unbounded operators affiliated to $\Mcal$).
It is defined to be the Laplacian (in the sense of distributions in $\Cpx$) of the function $f(\lambda)=\frac1{2\pi}\log\Delta(T-\lambda)$;
Brown proved, among other properties, that it is a probability measure whose support is contained in the spectrum of $T$.
Later, Haagerup and Schultz~\cite{HS07} proved that
the Fuglede-Kadison determinant and Brown measure are defined and have nice properties for all
closed, densely defined, possibly unbounded operators $T$ affiliated to $\Mcal$ such that
$\tau(\log^+|T|)<\infty$, where $\log^+(x)=\max(\log(x),0)$.
We will use the notation $\exp(\Lc_1)(\Mcal,\tau)$ for this set.
It is easy to see that $\exp(\Lc^1)(\Mcal,\tau)$ is an $\Mcal$-bimodule;
it is, in fact, a $*$-algebra containing $\Mcal$ as a $*$-subalgebra (see~\cite{DSZ:unbdd}).
A characterization (Theorem~2.7 of~\cite{HS07})
of the Brown measure $\nu_T$ of $T\in\exp(\Lc^1)(\Mcal,\tau)$ is as the unique probability measure on $\Cpx$
satisfying
\begin{equation}\label{eq:char1}
\int_{\Cpx}\log^+|z|\,d\nu_T(z)<\infty
\end{equation}
and
\begin{equation}\label{eq:char2}
\int_{\Cpx}\log|z-\lambda|\,d\nu_T(z)=\log\Delta(T-\lambda)\qquad(\lambda\in\Cpx).
\end{equation}

Brown measure is naturally defined on elements of $\exp(\Lc^1)$;
we will need reduction theory also for unbounded operators in Hilbert space.
Nussbaum~\cite{Nu64} introduced this theory and developed several aspects of it.
In this paper, we will prove and make use of some further results about direct integral decompositions of unbounded operators, for example,
about (a) functional calculus for decomposable unbounded self-adjoint operators, (b) polar decompositions and (c) affiliated operators.

The outline of the rest of the paper is as follows:
Section~\ref{sec:prelims} contains preliminaries about reduction theory, gleaned from the text~\cite{Dix81} of Dixmier
and the paper~\cite{Nu64} of Nussbaum;
Section~\ref{sec:bdd} contains results about reduction theory for bounded operators, including results about spectral projections and functional calculus;
Section~\ref{sec:unbdd} contains results about reduction theory for unbounded operators, including functional calculus, polar decomposition
and affiliattion of operators to von Neumann algebras;
Section~\ref{sec:Brown} contains our main result about Brown measure of a decomposable operator.

\section{Preliminaries about reduction theory}
\label{sec:prelims}

In this section, we will recall elements of the reduction theory for von Neumann algebras as expounded by Dixmier~\cite{Dix81}
and some definitions and results from Nussbaum's paper~\cite{Nu64} on reduction theory for unbounded operators.
Throughout, we let $\meas$ be a fixed $\sigma$-finite positive measure on a standard Borel space $\measspace$,
namely a Polish space endowed with the Borel $\sigma$-algebra.

\subsection{Direct integrals of Hilbert spaces}
\label{subsec:HilSp}
A {\em measurable field} of Hilbert spaces is a function $\zeta\mapsto\HEu(\zeta)$, ($\zeta\in \measspace$), where each $\HEu(\zeta)$ is a Hilbert space,
together with a set $S$ of vector fields (namely, functions $\zeta\mapsto x(\zeta)\in\HEu(\zeta)$) that are said to be measurable
and that satisfy
\begin{enumerate}[(i)]
\item that the function $\zeta\mapsto\langle x(\zeta),y(\zeta)\rangle$ is measurable for all $x,y\in S$ and
\item if $v$ is a vector field and the function $\zeta\mapsto\langle x(\zeta),v(\zeta)\rangle$ is measurable for each $x\in S$, then $v\in S$.
\end{enumerate}
The {\em direct integral Hilbert space}
\[
\HEu=\int_\measspace^\oplus\HEu(\zeta)\,d\meas(\zeta)
\]
consists of all measurable vector fields $x\in S$
for which the function $\zeta\mapsto\|x(\zeta)\|^2$ is integrable with respect to $\meas$.
The inner product on $\HEu$ is given by
\[
\langle x,y\rangle=\int_\measspace\langle x(\zeta),y(\zeta)\rangle\,d\meas(\zeta).
\]
See~\cite{Dix81} Sections II.1.1-II.1.5.

\subsection{Fields of Bounded Operators}
\label{subsec:BddOps}
A field $\zeta\mapsto T(\zeta)\in B(\HEu(\zeta))$ ($\zeta\in\measspace$) of bounded operators is said to be {\em measurable} if for every measurable vector field $x\in S$
(as in~\ref{subsec:HilSp}) the field $\zeta\mapsto T(\zeta)x(\zeta)$ is measurable.
In this case, the map $\zeta\mapsto\|T(\zeta)\|$ is measurable.
See~\cite{Dix81} Section II.2.1.

\subsection{Decomposable and Diagonal Bounded Operators}
\label{subsec:ddBdd}
If $T$ is a measurable field of bounded operators as in~\ref{subsec:BddOps} and if the map
\begin{equation}\label{eq:Tzeta}
\zeta\mapsto\|T(\zeta)\|
\end{equation}
is essentially bounded, where $\|\cdot\|$ is the operator norm,
then $T$ describes a bounded linear operator, also denoted by $T$, on the direct integral Hilbert space $\HEu$, by $(Tx)(\zeta)=T(\zeta)x(\zeta)$,
and we write
\begin{equation}\label{eq:Tdecomp0}
T=\int_\measspace^\oplus T(\zeta)\,d\meas(\zeta).
\end{equation}
The norm of $T$ equals the essential supremum of the map~\eqref{eq:Tzeta}.
Such operators $T$ on $\HEu$ are said to be {\em decomposable}.
The set of decomposable operators, which we will denote $\Ec$, is a subalgebra of $B(\HEu)$ and the $*$-algebra
operations have the obvious almost-everywhere-pointwise
interpretation --- see~\cite{Dix81} Section II.2.3.
In particular, $T$ is self-adjoint if and only if $T(\zeta)$ is self-adjoint for almost every $\zeta$
and $T\ge0$ if and only if $T(\zeta)\ge0$ for almost every $\zeta$.
The {\em diagonal operators} are the decomposable operators $T$ for which each $T(\zeta)$ is a scalar multiple of the identity operator on $\HEu(\zeta)$.
The algebra of all diagonal operators, which we shall denote $\Dc$, is a von Neumann algebra isomorphic to $L^\infty(\measspace,\meas)$, and its commutant is the von Neumann
algebra $\Ec$ of decomposable operators  --- see~\cite{Dix81} Sections II.2.4 and II.2.5.

\subsection{Fields of von Neumann algebras}
\label{subsec:vN}
All of the von Neumann algebras considered in this paper will be assumed to be countably generated.
If $\Ac$ is a von Neumann algebra in $B(\HEu)$ that is generated by the algebra $\Dc$ of diagonalizable operators together with a countable set $\{T_i\mid i\ge1\}$
of decomposable operators, then $\Ac$ is said to be {\em decomposable}.
Letting $\Ac(\zeta)$ be the von Neumann algebra in $B(\HEu(\zeta))$ generated
by $\{T_i(\zeta)\mid i\ge1\}$, we have that whenever $T$ is a decomposable operator, then $T\in\Ac$ if and only if $T(\zeta)\in\Ac(\zeta)$ for almost every $\zeta$.
We write
\[
\Ac=\int_\measspace^\oplus\Ac(\zeta)\,d\meas(\zeta).
\]
Note that the von Neumann algebra $\Dc$ of diagonal operators is contained in the center of $\Ac$.
See~\cite{Dix81} Sections II.3.1 to II.3.2.

\subsection{Measurable fields of traces}
Suppose $\Ac=\int_\measspace^\oplus\Ac(\zeta)\,d\meas(\zeta)$ is a decomposable von Neumann algebra and
$\zeta\mapsto\tau_\zeta$ is a field of traces, each $\tau_\zeta$ being a trace on $\Ac(\zeta)^+$ taking values in $[0,+\infty]$.
The field of traces is said to be {\em measurable} if for every $T=\int_\measspace^\oplus T(\zeta)\,d\meas(\zeta)\in\Ac$,
the function $\zeta\mapsto\tau_\zeta(T(\zeta))$ is measurable.
In this case
\[
\tau=\int_\measspace^\oplus\tau_\zeta\,d\meas(\zeta)
\]
denotes the trace on $\Ac^+$ defined as follows.
When $T\in\Ac^+$, writing $T$ as in~\eqref{eq:Tdecomp0}, we have
\[
\tau(T)=\int_\measspace\tau_\zeta(T(\zeta))\,d\meas(\zeta).
\]
See~\cite{Dix81} Section II.5.1.

\subsection{Direct integral decomposition of  a finite von Neumann algebra and trace}
\label{subsec:finvN}
If $\Ac=\int_\measspace^\oplus\Ac(\zeta)\,d\meas(\zeta)$ is a decomposable von Neumann algebra and $\tau$ is a normal, faithful, tracial state on $\Ac$,
then there is a measurable field $\zeta\mapsto\tau_\zeta$ of normal, faithful, finite traces $\tau_\zeta$ on $\Ac(\zeta)$, so that 
\[
\tau=\int_\measspace^\oplus\tau_\zeta\,d\meas(\zeta).
\]
After redefining $\meas$, if necessary, we may without loss of generality assume each $\tau_\zeta$ is a tracial state.
See the Corollary in~\cite{Dix81} Section II.5.2.

\subsection{Measurable fields of unbounded operators}
\label{subsec:Nmeas}
We will denote the domain of a closed (possibly unbounded) operator $T$ on a Hilbert space by $\dom(T)$.
Let $\zeta\mapsto T(\zeta)$ be a field of closed operators on $\HEu(\zeta)$.
Let $P(\zeta)=(P_{ij}(\zeta))_{1\le i,j\le 2}\in M_2(B(\HEu(\zeta))$ be the projection onto the graph of $T(\zeta)$.
Nussbaum~\cite{Nu64} introduced the following notion of measurability:
the field of operators is {\em measurable} if for all $i$ and $j$, the field $P_{ij}(\zeta)$ of bounded operators
is measurable, in the sense of~\ref{subsec:BddOps}.
Proposition~6 of~\cite{Nu64} shows that in the case of an essentially bounded field of bounded operators,
measurablility in the above sense is equivalent to measurability as found in~\ref{subsec:BddOps}.
The field $\zeta\mapsto T(\zeta)$ is said to be {\em weakly measurable} if for every measurable vector field $\zeta\mapsto x(\zeta)$
of vectors such that for all $\zeta$, $x(\zeta)\in\dom(T(\zeta))$, the vector field $\zeta\mapsto T(\zeta)x(\zeta)$ is measurable.
Nussbaum proves (Corollary 2 of~\cite{Nu64}) that every measurable field $\zeta\mapsto T(\zeta)$ of closed operators is weakly measurable,
while the converse statement was shown to be false in~\cite{GGST12}.

\subsection{Decomposable unbounded operators}
\label{subsec:decompunbdd}
Given a measurable field $\zeta\to T(\zeta)$ of closed operators as in~\ref{subsec:Nmeas}
and letting $\HEu=\int_\measspace^\oplus\HEu(\zeta)\,d\meas(\zeta)$ be the direct integral Hilbert space,
define the operator $T$ to have domain equal to the set of vectors $x\in\HEu$ defined
by square integrable vector fields $\zeta\mapsto x(\zeta)$ such that $x(\zeta)\in\dom(T(\zeta))$ for all $\zeta$ and such that the vector field
$\zeta\mapsto T(\zeta)x(\zeta)$ is square integrable, and for such an $x$ to have value $Tx$ equal to the vector field
\[
(Tx)(\zeta)=T(\zeta)x(\zeta).
\]
By Proposition~7 of~\cite{Nu64}, $T$ is a closed operator.
A closed operator that arises in this way from a measurable field of closed operators is said to be {\em decomposable}.
By Corollary~4 of~\cite{Nu64}, a closed operator in $\HEu$ is decomposable if and only if it
permutes\footnote{
Using notation that seems to be out of fashion but was once standard, we say that a bounded operator $S$ {\em permutes} with a closed, possibly unbounded operator $T$
if $S(\dom(T))\subseteq\dom(T)$ and $TSx=STx$ for all $x\in\dom(T)$.}
with all the bounded diagonalizable operators, as described in~\ref{subsec:ddBdd}.
By Theorem~2 of~\cite{Nu64}, a closed operator in $\HEu$ is decomposable if and only if 
it is affiliated with the von Neumann algebra $\Ec$ of all bounded decomposable operators.

Suppose $T=\int_\measspace^\oplus T(\zeta)\,d\meas(\zeta)$ is a decomposable closed operator.
By Theorem~3 of~\cite{Nu64}:
\begin{enumerate}[(a)]
\item $T(\zeta)$ is densely defined in $\HEu(\zeta)$ for almost every $\zeta$ if and only if $T$ is densely defined in $\HEu$;
\item $T(\zeta)$ is self-adjoint for almost every $\zeta$ if and only if $T$ is self-adjoint;
\end{enumerate}

\section{Spectral projections and functional calculus for bounded operators}
\label{sec:bdd}

In this section we treat spectral projections and functional calculus of bounded decomposable operators,
with respect to
a fixed direct integral decomposition of Hilbert space
\[
\HEu=\int_\measspace^\oplus\HEu(\zeta)\,d\meas(\zeta).
\]

We let $\sigma(\cdot)$ denote the spectrum of an operator.
The following lemma is Proposition 1.1 of~\cite{Le74} and follows from Theorem~4.3 of~\cite{Az74}, which were proved independently.

\begin{lem}\label{lem:spec}
Suppose
\[
X=\int_\measspace^\oplus X(\zeta)\,d\meas(\zeta)
\]
is a bounded, decomposable operator.
Then for almost every $\zeta$, we have  $\sigma(X(\zeta))\subseteq\sigma(X)$.
\end{lem}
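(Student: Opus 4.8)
The plan is to reduce everything to the resolvent together with the fact that $\Ec=\Dc'$ is a von Neumann algebra, hence inverse-closed. Fix $\lambda\notin\sigma(X)$. Then $X-\lambda$ is $X$ minus the diagonal operator $\lambda I\in\Dc\subseteq\Ec$, so $X-\lambda\in\Ec$, and it is invertible in $B(\HEu)$, so $(X-\lambda)^{-1}\in\Ec$ is again decomposable, say $(X-\lambda)^{-1}=\int_\measspace^\oplus Y_\lambda(\zeta)\,d\meas(\zeta)$. Because the algebraic operations on decomposable operators are performed almost-everywhere pointwise (Section~II.2.3 of~\cite{Dix81}), the identities $(X-\lambda)(X-\lambda)^{-1}=I=(X-\lambda)^{-1}(X-\lambda)$ yield, for $\meas$-almost every $\zeta$, that $(X(\zeta)-\lambda)Y_\lambda(\zeta)=I_{\HEu(\zeta)}=Y_\lambda(\zeta)(X(\zeta)-\lambda)$; hence $X(\zeta)-\lambda$ is boundedly invertible with $(X(\zeta)-\lambda)^{-1}=Y_\lambda(\zeta)$. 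Since the norm of a decomposable operator is the essential supremum of the norms of its fibres, we also get $\|(X(\zeta)-\lambda)^{-1}\|\le\|(X-\lambda)^{-1}\|$ for a.e.\ $\zeta$. This disposes of a single $\lambda$: off a null set $N_\lambda$, we have $\lambda\notin\sigma(X(\zeta))$.

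The main obstacle is that $N_\lambda$ depends on $\lambda$, and one cannot simply take a countable union over a dense set of $\lambda\in\Cpx\setminus\sigma(X)$, since $\sigma(X(\zeta))$ is closed and a countable dense set of parameters misses only a meagre part of the open set $U:=\Cpx\setminus\sigma(X)$. To get around this I would use continuity of the resolvent. Choose countably many closed disks $\overline{B}_k\subseteq U$ whose interiors cover $U$ (possible because $U$ is open in $\Cpx\cong\Reals^2$, which is Lindel\"of). On each $\overline{B}_k$ set $C_k=\sup_{\lambda\in\overline{B}_k}\|(X-\lambda)^{-1}\|$, which is finite by compactness and norm-continuity of $\lambda\mapsto(X-\lambda)^{-1}$ on $U$. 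Pick a countable dense $D_k\subseteq\overline{B}_k$, put $D=\bigcup_kD_k$, and let $N=\bigcup_{\lambda\in D}N_\lambda$; this $N$ is still $\meas$-null.

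Finally, fix $\zeta\notin N$. For every $k$ and every $\lambda\in D_k$, $X(\zeta)-\lambda$ is invertible with $\|(X(\zeta)-\lambda)^{-1}\|\le C_k$. If some $\mu\in\sigma(X(\zeta))$ belonged to $\operatorname{int}(\overline{B}_k)$, choose $\lambda_n\in D_k$ with $\lambda_n\to\mu$; then $\|(X(\zeta)-\lambda_n)^{-1}\|\ge\operatorname{dist}(\lambda_n,\sigma(X(\zeta)))^{-1}\ge|\lambda_n-\mu|^{-1}\to\infty$, contradicting the bound $C_k$. Hence $\sigma(X(\zeta))$ is disjoint from $\bigcup_k\operatorname{int}(\overline{B}_k)\supseteq U$, i.e.\ $\sigma(X(\zeta))\subseteq\sigma(X)$, for all $\zeta\notin N$. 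Apart from this passage from the pointwise statement to a single null set—handled by the uniform resolvent bound on compact subdisks—the argument is routine bookkeeping with the almost-everywhere-pointwise calculus in $\Ec$.
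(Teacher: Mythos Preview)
Your argument is correct. The key steps---that $\Ec=\Dc'$ is inverse-closed so the global resolvent $(X-\lambda)^{-1}$ decomposes, that the fibrewise norm bound $\|(X(\zeta)-\lambda)^{-1}\|\le\|(X-\lambda)^{-1}\|$ holds almost everywhere, and that a uniform resolvent bound on a countable dense subset of a compact disk $\overline{B}_k\subseteq\Cpx\setminus\sigma(X)$ forces the resolvent set of $X(\zeta)$ to contain the whole open disk via the standard inequality $\|(A-\lambda)^{-1}\|\ge\operatorname{dist}(\lambda,\sigma(A))^{-1}$---are all valid, and your handling of the passage from ``a.e.\ for each $\lambda$'' to ``for a.e.\ $\zeta$, all $\lambda$'' is exactly the right maneuver.

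Note, however, that the paper does not supply its own proof of this lemma: it simply records the statement and cites Proposition~1.1 of~\cite{Le74} and Theorem~4.3 of~\cite{Az74}. So there is no in-paper argument to compare against. What you have written is a self-contained proof along classical lines (and in fact close in spirit to Lennon's original), whereas the paper treats the result as an imported black box. Your version has the advantage of being internal to the toolkit already set up in Section~\ref{sec:prelims}; the paper's citation has the advantage of brevity and of pointing to the more general spectral results in those references.
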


By appeal to the standard $*$-algebra operations, we have:
\begin{lem}\label{lem:Xnormal}
Let $X$ be a bounded, decomposable operator.
Then $X$ is a normal operator if and only if  $X(\zeta)$ is normal for almost all $\zeta$.
\end{lem}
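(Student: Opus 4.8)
The plan is to reduce everything to the fact that a bounded decomposable operator is zero exactly when almost all of its fibers are zero. First I would recall, from the description of decomposable operators in Section~\ref{subsec:ddBdd} (see also \cite{Dix81} II.2.3), that the $*$-algebra operations on $\Ec$ act fiberwise almost everywhere: if $X=\int_\measspace^\oplus X(\zeta)\,d\meas(\zeta)$, then $X^*=\int_\measspace^\oplus X(\zeta)^*\,d\meas(\zeta)$ and, since $\Ec$ is a subalgebra of $B(\HEu)$ with pointwise multiplication, $X^*X=\int_\measspace^\oplus X(\zeta)^*X(\zeta)\,d\meas(\zeta)$ and $XX^*=\int_\measspace^\oplus X(\zeta)X(\zeta)^*\,d\meas(\zeta)$. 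Hence the self-adjoint decomposable operator
\[
C:=X^*X-XX^*
\]
has fiber decomposition $C=\int_\measspace^\oplus\big(X(\zeta)^*X(\zeta)-X(\zeta)X(\zeta)^*\big)\,d\meas(\zeta)$.

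Next I would invoke the norm formula for decomposable operators from Section~\ref{subsec:ddBdd}, namely that $\|C\|$ equals the essential supremum of the map $\zeta\mapsto\|C(\zeta)\|$, which is measurable by Section~\ref{subsec:BddOps}. It follows that $C=0$ as an operator on $\HEu$ if and only if $\|C(\zeta)\|=0$ for $\meas$-almost every $\zeta$; equivalently, the measurable set $N=\{\zeta:X(\zeta)^*X(\zeta)\neq X(\zeta)X(\zeta)^*\}$ is $\meas$-null.

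Finally, $X$ is normal precisely when $X^*X=XX^*$, i.e.\ when $C=0$; by the previous step this holds if and only if $X(\zeta)^*X(\zeta)=X(\zeta)X(\zeta)^*$ off a null set, that is, if and only if $X(\zeta)$ is normal for almost every $\zeta$. This finishes the argument. There is no substantial obstacle here: the only point requiring care is that the adjoint and products of decomposable operators are computed fiberwise almost everywhere, which is exactly what is recorded in \cite{Dix81} II.2.3; once that is in hand, the equivalence is immediate.
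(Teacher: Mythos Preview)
Your proof is correct and is precisely the argument the paper has in mind: the paper's own ``proof'' is just the one-line remark ``By appeal to the standard $*$-algebra operations,'' and you have simply spelled out those details using the fiberwise adjoint, product, and the essential-supremum norm formula from \cite{Dix81} II.2.3.
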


We consider now the continuous functional calculus, which is quite straightforward to prove, and must be well known.
\begin{lem}\label{lem:contsFC}
Let $X$ be a bounded, normal, decomposable operator.
Using Lemmas~\ref{lem:spec} and~\ref{lem:Xnormal},
by redefining $X(\zeta)$ for $\zeta$ in a null set, if necessary, we may suppose $X(\zeta)$ is normal and has spectrum contained in $\sigma(X)$ for all $\zeta$.
Suppose $f:\sigma(X)\to\Cpx$ is a continuous function.
Then in the continuous functional calculus, we have
\[
f(X)=\int_\measspace^\oplus f(X(\zeta))\,d\meas(\zeta).
\]
\end{lem}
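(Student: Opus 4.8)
The plan is to prove the identity first for $*$-polynomials and then pass to a limit via the Stone--Weierstrass theorem. Let $p(z,\zbar)$ be a polynomial in $z$ and $\zbar$. Since the decomposable operators form a $*$-algebra whose algebraic and adjoint operations are computed almost-everywhere pointwise (Subsection~\ref{subsec:ddBdd}), we have $p(X,X^*)=\int_\measspace^\oplus p(X(\zeta),X(\zeta)^*)\,d\meas(\zeta)$. Moreover, writing $f_p(z)=p(z,\zbar)$, for each $\zeta$ the operator $p(X(\zeta),X(\zeta)^*)$ coincides with $f_p(X(\zeta))$ in the continuous functional calculus of the normal operator $X(\zeta)$, and likewise $p(X,X^*)=f_p(X)$. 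Thus the lemma holds when $f=f_p$.

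Now fix an arbitrary continuous $f\colon\sigma(X)\to\Cpx$. The $*$-polynomials form a self-adjoint unital subalgebra of $C(\sigma(X))$ that separates points, so by Stone--Weierstrass there are $*$-polynomials $p_n$ with $\|f-f_{p_n}\|_{C(\sigma(X))}\to0$. Since $\sigma(X(\zeta))\subseteq\sigma(X)$ for every $\zeta$, the continuous functional calculus for the normal operators $X(\zeta)$ and $X$ gives
\[
\|f(X(\zeta))-f_{p_n}(X(\zeta))\|=\|f-f_{p_n}\|_{C(\sigma(X(\zeta)))}\le\|f-f_{p_n}\|_{C(\sigma(X))}
\]
and $\|f(X)-f_{p_n}(X)\|\le\|f-f_{p_n}\|_{C(\sigma(X))}$. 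Hence $f_{p_n}(X(\zeta))\to f(X(\zeta))$ in operator norm, uniformly in $\zeta$, and $f_{p_n}(X)\to f(X)$ in $B(\HEu)$.

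Next I would verify that $\zeta\mapsto f(X(\zeta))$ is a measurable field of bounded operators. Each field $\zeta\mapsto f_{p_n}(X(\zeta))=p_n(X(\zeta),X(\zeta)^*)$ is measurable, being built from the measurable field $\zeta\mapsto X(\zeta)$ by the $*$-algebra operations (Subsection~\ref{subsec:BddOps}). For any measurable vector field $x\in S$ (as in Subsection~\ref{subsec:HilSp}), the vectors $f_{p_n}(X(\zeta))x(\zeta)$ converge in $\HEu(\zeta)$ to $f(X(\zeta))x(\zeta)$ for every $\zeta$, and a pointwise-along-a-sequence limit of measurable vector fields is measurable; therefore $\zeta\mapsto f(X(\zeta))x(\zeta)$ is measurable, so $\zeta\mapsto f(X(\zeta))$ is measurable. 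It is essentially bounded since $\|f(X(\zeta))\|=\sup_{\sigma(X(\zeta))}|f|\le\|f\|_{C(\sigma(X))}$, so $Y:=\int_\measspace^\oplus f(X(\zeta))\,d\meas(\zeta)$ is a well-defined bounded decomposable operator.

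Finally, $Y-\int_\measspace^\oplus f_{p_n}(X(\zeta))\,d\meas(\zeta)$ is decomposable, with norm equal to the essential supremum over $\zeta$ of $\|f(X(\zeta))-f_{p_n}(X(\zeta))\|\le\|f-f_{p_n}\|_{C(\sigma(X))}$; hence $\int_\measspace^\oplus f_{p_n}(X(\zeta))\,d\meas(\zeta)\to Y$ in $B(\HEu)$. Combining with the previous steps, $f_{p_n}(X)=\int_\measspace^\oplus f_{p_n}(X(\zeta))\,d\meas(\zeta)$ converges both to $f(X)$ and to $Y$, so $f(X)=Y$ by uniqueness of limits. The one point that needs care is the measurability of the limit field $\zeta\mapsto f(X(\zeta))$, handled by the uniform approximation above together with the stability of measurability under pointwise limits; everything else is a direct application of the $*$-algebra structure of the decomposable operators and of basic properties of the continuous functional calculus.
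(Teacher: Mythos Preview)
Your proof is correct and follows essentially the same route as the paper's: approximate $f$ uniformly on $\sigma(X)$ by $*$-polynomials (via Stone--Weierstrass), use that the $*$-algebra operations thread through the decomposition, and pass to the limit using that the norm of a decomposable operator is the essential supremum of the fiberwise norms. You are slightly more careful than the paper in that you explicitly verify measurability and essential boundedness of the field $\zeta\mapsto f(X(\zeta))$ before forming the direct integral, whereas the paper leaves this implicit.
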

\begin{proof}
Take a sequence $(g_k)_{k=1}^\infty$ of polynomials in $z$ and $\zbar$ such that $g_k(z,\zbar)$ converges
uniformly to $f(z)$ for all $z\in\sigma(X)$.
Letting $\eps_k=\max_{z\in\sigma(X)}|f(z)-g_k(z,\zbar)|$, we have $\lim_{k\to\infty}\eps_k=0$.
But $\|f(X)-g_k(X,X^*)\|=\eps_k$ and for each $\zeta$, since $\sigma(X(\zeta))\subseteq\sigma(X)$,
we have $\|f(X(\zeta)-g_k(X(\zeta),X(\zeta)^*)\|\le\eps_k$
and from this we get (see~\ref{subsec:ddBdd}),
\[
\left\|\int_\measspace^\oplus f(X(\zeta))\,d\meas(\zeta)-\int_\measspace^\oplus g_k(X(\zeta),X(\zeta)^*)\,d\meas(\zeta)\right\|\le\eps_k.
\]
Since the $*$-algebra operations thread through decompositions, we have
\[
g_k(X,X^*)=\int_\measspace^\oplus g_k(X(\zeta),X(\zeta)^*)\,d\meas(\zeta).
\]
Taking $k\to\infty$ finishes the proof.
\end{proof}

We next consider spectral projections.
For a normal operator $X$ and a Borel subset $B$ of $\Cpx$, we will denote by $E_X(B)$ the corresponding spectral projection.
The following result is a special case of Proposition~1.4 of~\cite{Le74}.
However, for convenience, we provide a direct proof of this easier result.

\begin{prop}\label{prop:EB}
Suppose $X=\int_\measspace^\oplus X(\zeta)\,d\meas(\zeta)$ is a bounded, normal, decomposable operator 
and, as above, assume without loss of generality $X(\zeta)$ is normal and has spectrum contained in $\sigma(X)$ for all $\zeta$.
Let $B$ be a Borel subset of $\Cpx$.
Then
\begin{equation}\label{eq:EXB}
E_X(B)=\int_\measspace^\oplus E_{X(\zeta)}(B)\,d\meas(\zeta).
\end{equation}
\end{prop}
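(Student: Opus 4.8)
The plan is to consider the collection $\mathcal{B}$ of Borel sets $B\subseteq\Cpx$ for which the field $\zeta\mapsto E_{X(\zeta)}(B)$ is measurable in the sense of~\ref{subsec:BddOps} and equation~\eqref{eq:EXB} holds; note that every field of projections is bounded by $1$, so once $\zeta\mapsto E_{X(\zeta)}(B)$ is known to be measurable the right-hand side of~\eqref{eq:EXB} is automatically a decomposable operator as in~\ref{subsec:ddBdd}. I would prove that $\mathcal{B}$ is a Dynkin system containing every open subset of $\Cpx$. Since the open sets form a $\pi$-system generating the Borel $\sigma$-algebra, the $\pi$--$\lambda$ theorem then shows that $\mathcal{B}$ contains all Borel sets, which is the assertion.

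Two soft facts about measurable fields underlie the argument. First, a pointwise limit of measurable vector fields is again measurable: if $x_n\in S$ and $x(\zeta)=\lim_n x_n(\zeta)$ for all $\zeta$, then for each $y\in S$ the function $\zeta\mapsto\langle y(\zeta),x(\zeta)\rangle$ is a pointwise limit of measurable functions, hence measurable, so $x\in S$ by axiom~(ii) of~\ref{subsec:HilSp}. From this it follows that if $Y_n=\int_\measspace^\oplus Y_n(\zeta)\,d\meas(\zeta)$ are decomposable with $\sup_n\|Y_n\|<\infty$ and $Y_n(\zeta)\to Y(\zeta)$ strongly for every $\zeta$, then $\zeta\mapsto Y(\zeta)$ is measurable and, by the dominated convergence theorem applied to $\zeta\mapsto\|(Y_n(\zeta)-Y(\zeta))x(\zeta)\|^2$, the operators $\int_\measspace^\oplus Y_n(\zeta)\,d\meas(\zeta)$ converge strongly to $\int_\measspace^\oplus Y(\zeta)\,d\meas(\zeta)$ on $\HEu$. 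This ``interchange of a strong limit with the direct integral'' is the only genuinely technical point; I expect it to be the main, though mild, obstacle, the rest being bookkeeping.

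Next I would treat open sets. For open $U\subseteq\Cpx$ set $f_n(\lambda)=\min\{1,\,n\operatorname{dist}(\lambda,\Cpx\setminus U)\}$, which is continuous on $\Cpx$ and satisfies $0\le f_n\nearrow\mathbf 1_U$ pointwise. Lemma~\ref{lem:contsFC} (applied to the restrictions of the $f_n$ to $\sigma(X)$) gives $f_n(X)=\int_\measspace^\oplus f_n(X(\zeta))\,d\meas(\zeta)$ with each field $\zeta\mapsto f_n(X(\zeta))$ measurable, while the bounded convergence theorem for the spectral measures of $X$ and of each $X(\zeta)$ gives $f_n(X)\to E_X(U)$ and $f_n(X(\zeta))\to E_{X(\zeta)}(U)$ strongly. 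By the previous paragraph, $\zeta\mapsto E_{X(\zeta)}(U)$ is measurable and $\int_\measspace^\oplus f_n(X(\zeta))\,d\meas(\zeta)\to\int_\measspace^\oplus E_{X(\zeta)}(U)\,d\meas(\zeta)$ strongly; comparing strong limits yields $E_X(U)=\int_\measspace^\oplus E_{X(\zeta)}(U)\,d\meas(\zeta)$, so $U\in\mathcal{B}$.

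Finally I would check the Dynkin axioms. One has $\Cpx\in\mathcal{B}$ because $E_X(\Cpx)=I=\int_\measspace^\oplus I\,d\meas(\zeta)$. If $B\in\mathcal{B}$ then $\zeta\mapsto E_{X(\zeta)}(\Cpx\setminus B)=I-E_{X(\zeta)}(B)$ is measurable and $E_X(\Cpx\setminus B)=I-E_X(B)=\int_\measspace^\oplus(I-E_{X(\zeta)}(B))\,d\meas(\zeta)$, so $\Cpx\setminus B\in\mathcal{B}$. And if $(B_k)_{k\ge 1}\subseteq\mathcal{B}$ are pairwise disjoint with union $B$, then for each $N$ the finite sum $\sum_{k=1}^N E_{X(\zeta)}(B_k)=E_{X(\zeta)}(\bigcup_{k\le N}B_k)$ defines a measurable field whose direct integral equals $\sum_{k=1}^N E_X(B_k)=E_X(\bigcup_{k\le N}B_k)$; letting $N\to\infty$, countable additivity of the spectral measures of $X$ and of each $X(\zeta)$ gives strong convergence to $E_X(B)$ and to $E_{X(\zeta)}(B)$ respectively, and one more application of the interchange step from the second paragraph yields $E_X(B)=\int_\measspace^\oplus E_{X(\zeta)}(B)\,d\meas(\zeta)$, i.e.\ $B\in\mathcal{B}$. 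Thus $\mathcal{B}$ is a Dynkin system containing the open sets, and the proposition follows from the $\pi$--$\lambda$ theorem.
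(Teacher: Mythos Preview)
Your proof is correct and follows essentially the same strategy as the paper's own proof: first establish~\eqref{eq:EXB} for open sets by approximating $\mathbf 1_U$ from below by continuous functions, invoking Lemma~\ref{lem:contsFC}, and passing to the strong limit fiberwise; then extend to all Borel sets by a monotone-class argument. The only cosmetic differences are that the paper works with open bounded rectangles rather than all open sets, shows directly that the good collection is a $\sigma$-algebra (using $E_X(B_i\cup B_j)=E_X(B_i)+E_X(B_j)-E_X(B_i)E_X(B_j)$ for finite unions) rather than checking the Dynkin axioms and invoking the $\pi$--$\lambda$ theorem, and cites Dixmier (Proposition~4 of Section~II.2.3) for the interchange of strong limits with the direct integral rather than deriving it from dominated convergence as you do.
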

\begin{proof}
First suppose that $B$ is a nonempty open, bounded rectangle in $\Cpx$.
Let $(f_n)_{n=1}^\infty$ be an increasing sequence of continuous functions on $\Cpx$, each taking values in $[0,1]$ and vanishing outside of $B$
and such that $f_n$ converges pointwise to $1_B$ (the characteristic function of $B$) as $n\to\infty$.
By Lemma~\ref{lem:contsFC}, we have
\[
f_n(X)=\int_\measspace^\oplus f_n(X(\zeta))\,d\meas(\zeta).
\]
Since $f_n$ is increasing to $1_B$, by the spectral theorem, $f_n(X)$ converges in strong operator topology to $E_X(B)$.
Similarly, for every $\zeta$, $f_n(X(\zeta))$ converges in strong operator topology to $E_{X(\zeta)}(B)$, for all $\zeta$.
Thus, by Proposition~4 of Section II.2.3 of~\cite{Dix81},
$f_n(X)$ converges strongly to $\int_\measspace^\oplus E_{X(\zeta)}(B)\,d\meas(\zeta)$.
This yields the equality~\eqref{eq:EXB} when $B$ is an open rectangle.

We now show that the set $\beta$ of Borel sets $B$ with the property~\eqref{eq:EXB}
is a $\sigma$-algebra.
First, if $B \in \beta$, then 
\begin{multline*}
E_X(B^c)= 1-E_X(B) = 1-\int_\measspace^\oplus E_{X(\zeta)}(B)\,d\meas(\zeta) \\
= \int_\measspace^\oplus \left( 1-E_{X(\zeta)}(B)\right) d\meas(\zeta)=\int_\measspace^\oplus E_{X(\zeta)}(B^c) d\meas(\zeta),
\end{multline*}
so $B^c \in \beta.$  Now let $(B_n)_{n=1}^\infty$ be a sequence of sets from $\beta$.  For any $i,j \in \Nats$ we have 
\begin{align*}
E_X(B_i \cup B_j) &= E_X(B_i) + E_X(B_j) - E_X(B_i)E_X(B_j)\\ 
&= \int_\measspace^\oplus \left( E_{X(\zeta)}(B_i)+E_{X(\zeta)}(B_j)-E_{X(\zeta)}(B_i)E_{X(\zeta)}(B_j)\right)\,d\meas(\zeta) \\
&= \int_\measspace^\oplus E_{X(\zeta)}(B_i \cup B_j)\,d\meas(\zeta),
\end{align*}
so $B_i \cup B_j \in \beta$.  Hence $\beta$ is closed under finite unions.
Thus, for every $n$, we have
\[
E_X\big( \bigcup_{i=1}^n B_i \big)
= \int_\measspace^\oplus E_{X(\zeta)} \big( \bigcup_{i=1}^n B_i \big)\,d\meas(\zeta).
\]
But $E_X(\bigcup_{i=1}^n B_i)$ converges in strong operator topology to $E_X(\bigcup_{i=1}^\infty B_i)$,
and for each $\zeta$,
$E_{X(\zeta)}(\bigcup_{i=1}^n B_i)$ converges in strong operator topology to $E_{X(\zeta)}(\bigcup_{i=1}^\infty B_i)$.
So applying again Proposition~4 of Section II.2.3 of~\cite{Dix81}, we get
\[
E_X\big( \bigcup_{i=1}^\infty B_i \big)
= \int_\measspace^\oplus E_{X(\zeta)} \big( \bigcup_{i=1}^\infty B_i \big)\,d\meas(\zeta).
\]
Thus $\beta$ is a $\sigma$-algebra.

Since $\beta$ contains all of the bounded open rectangles, it is the whole Borel $\sigma$-algebra of $\Cpx$.
\end{proof}

From the above result, it is easy to show that an analogue of Lemma~\ref{lem:contsFC} holds for the Borel functional calculus.
\begin{prop}\label{prop:Linfty}
Let $X=\int_\measspace^\oplus X(\zeta)\,d\meas(\zeta)$ be a bounded, normal, decomposable operator.
Using Lemmas~\ref{lem:spec} and~\ref{lem:Xnormal},
by redefining $X(\zeta)$ for $\zeta$ in a null set, if necessary, we may suppose $X(\zeta)$ is normal and has spectrum contained in $\sigma(X)$ for all $\zeta$.
Suppose $f:\sigma(X)\to\Cpx$ is a bounded Borel function.
Then taking the Borel functional calculus, we have
\[
f(X)=\int_\measspace^\oplus f(X(\zeta))\,d\meas(\zeta).
\]
\end{prop}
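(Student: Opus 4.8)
The plan is to build up from Proposition~\ref{prop:EB}, which is precisely the assertion for characteristic functions, first to simple Borel functions by linearity and then to arbitrary bounded Borel functions by a uniform approximation.

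First I would treat a simple Borel function $f=\sum_{k=1}^{n}c_k\,1_{B_k}$, where the $B_k$ are Borel subsets of $\Cpx$. In the Borel functional calculus $f(X)=\sum_{k=1}^{n}c_k E_X(B_k)$ and, for every $\zeta$, $f(X(\zeta))=\sum_{k=1}^{n}c_k E_{X(\zeta)}(B_k)$; in particular the field $\zeta\mapsto f(X(\zeta))$ is measurable, being a finite linear combination of the measurable fields $\zeta\mapsto E_{X(\zeta)}(B_k)$ supplied by Proposition~\ref{prop:EB}. Since direct integration is linear, Proposition~\ref{prop:EB} gives
\[
f(X)=\sum_{k=1}^{n}c_k\int_\measspace^\oplus E_{X(\zeta)}(B_k)\,d\meas(\zeta)=\int_\measspace^\oplus f(X(\zeta))\,d\meas(\zeta),
\]
which is the desired identity for simple functions.

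Now let $f:\sigma(X)\to\Cpx$ be an arbitrary bounded Borel function. Since the range of $f$ is a bounded subset of $\Cpx$, there is a sequence $(f_n)_{n=1}^\infty$ of simple Borel functions on $\sigma(X)$ with $\eps_n:=\sup_{z\in\sigma(X)}|f(z)-f_n(z)|\to0$ (partition the range into finitely many Borel pieces of diameter $<1/n$ and pick one value in each). Because $\sigma(X(\zeta))\subseteq\sigma(X)$, the Borel functional calculus yields $\|f(X(\zeta))-f_n(X(\zeta))\|\le\eps_n$ for every $\zeta$, and likewise $\|f(X)-f_n(X)\|\le\eps_n$. The field $\zeta\mapsto f(X(\zeta))$ is a uniform, hence in particular pointwise, limit of the measurable fields $\zeta\mapsto f_n(X(\zeta))$, so it is itself a measurable field of bounded operators, and it is essentially bounded by $\|f\|_\infty$; thus $\int_\measspace^\oplus f(X(\zeta))\,d\meas(\zeta)$ is a well-defined decomposable operator, and from the description of the norm on $\Ec$ in~\ref{subsec:ddBdd},
\[
\left\|\int_\measspace^\oplus f(X(\zeta))\,d\meas(\zeta)-\int_\measspace^\oplus f_n(X(\zeta))\,d\meas(\zeta)\right\|\le\eps_n.
\]
Combining these three estimates with the identity $f_n(X)=\int_\measspace^\oplus f_n(X(\zeta))\,d\meas(\zeta)$ proved above for simple $f_n$, and letting $n\to\infty$, gives $f(X)=\int_\measspace^\oplus f(X(\zeta))\,d\meas(\zeta)$.

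The argument is essentially routine given Proposition~\ref{prop:EB}; the one place to be a little careful is the measurability of the field $\zeta\mapsto f(X(\zeta))$, and this is exactly why I would reduce to simple functions (whose associated fields are manifestly measurable by Proposition~\ref{prop:EB}) via a uniform rather than a monotone approximation, since measurability of operator fields passes to norm limits without further ado, whereas a passage through strong limits would need the separate justification (Proposition~4 of Section~II.2.3 of~\cite{Dix81}) used in the proof of Proposition~\ref{prop:EB}.
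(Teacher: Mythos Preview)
Your proof is correct and follows essentially the same route as the paper's: approximate $f$ uniformly by a simple Borel function, invoke Proposition~\ref{prop:EB} for the simple function, and pass to the limit using the operator-norm estimates. If anything, your version is a bit more careful, since you explicitly justify the measurability of the field $\zeta\mapsto f(X(\zeta))$ before forming its direct integral, a point the paper's proof leaves implicit.
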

\begin{proof}
Let $\eps>0$ and let $g=\sum_{j=1}^na_j1_{B_j}$ be a Borel measurable simple function such that $\sup_{z\in\sigma(X)}|f(z)-g(z)|<\eps$.
By Proposition~\ref{prop:EB}, we have
\[
g(X)=\int_\measspace^\oplus g(X(\zeta))\,d\meas(\zeta).
\]
But $\|g(X)-f(X)\|<\eps$.
Moreover, for all $\zeta$ we have $\|g(X(\zeta)-f(X(\zeta)\|<\eps$, so we get
\[
\left\|\int_\measspace^\oplus g(X(\zeta))\,d\meas(\zeta)-\int_\measspace^\oplus f(X(\zeta))\,d\meas(\zeta)\right\|\le\eps.
\]
This yields
\[
\left\|f(X)-\int_\measspace^\oplus f(X(\zeta))\,d\meas(\zeta)\right\|<2\eps.
\]
Letting $\eps\to0$ finishes the proof.
\end{proof}

\section{Functional calculus and affiliation for unbounded operators}
\label{sec:unbdd}

In this section, we prove a result about functional calculus for decomposable self-adjoint, possibly unbounded operators,
as well as a result about the polar decomposition of decomposable unbounded operators and one about affiliation to decomposable
von Neumann algebras.

\begin{lem}\label{lem:Cayley}
Let $T=\int_\measspace^\oplus T(\zeta)\,d\meas(\zeta)$ be a closed, (possibly unbounded), self-adjoint, decomposable operator.
Then the Cayley transform $(T+i)(T-i)^{-1}$ of $T$ is equal to the direct integral
\begin{equation}\label{eq:Cayley}
\int_\measspace^\oplus\big(T(\zeta) +i\big)\big(T(\zeta)-i\big)^{-1}\,d\meas(\zeta)
\end{equation}
of Cayley transforms.
\end{lem}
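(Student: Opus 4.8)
The plan is to reduce the whole statement to the bounded resolvent $(T-i)^{-1}$, whose fibers can be written down explicitly in terms of graph projections, and then to recover the Cayley transform by an elementary algebraic manipulation. First, since $T$ is self-adjoint, part~(b) of Theorem~3 of~\cite{Nu64} (recalled in~\ref{subsec:decompunbdd}) gives that $T(\zeta)$ is self-adjoint for almost every $\zeta$; redefining $T(\zeta)$ on a $\meas$-null set, we may assume $T(\zeta)$ is self-adjoint for \emph{every} $\zeta$. Then each $T(\zeta)\pm i$ is a bijection of $\dom(T(\zeta))$ onto $\HEu(\zeta)$, each $(T(\zeta)-i)^{-1}$ is bounded with norm at most $1$, and each fiberwise Cayley transform $(T(\zeta)+i)(T(\zeta)-i)^{-1}$ is unitary; in particular the integrand in~\eqref{eq:Cayley} is genuinely defined.

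The key step is to recognize $\zeta\mapsto(T(\zeta)-i)^{-1}$ as a measurable field of bounded operators. For a self-adjoint operator $S$, the orthogonal projection onto the graph of $S$ has the block form
\[
P=\begin{pmatrix} (1+S^2)^{-1} & S(1+S^2)^{-1} \\ S(1+S^2)^{-1} & 1-(1+S^2)^{-1}\end{pmatrix},
\]
where $S(1+S^2)^{-1}=(1+S^2)^{-1}S$ denotes the bounded operator of the functional calculus; moreover, since $(S-i)(S+i)=S^2+1$,
\[
(S-i)^{-1}=(S+i)(S^2+1)^{-1}=S(1+S^2)^{-1}+i(1+S^2)^{-1}=P_{21}+iP_{11}.
\]
Applying this with $S=T(\zeta)$, and writing $P(\zeta)=(P_{ij}(\zeta))_{1\le i,j\le 2}$ for the graph projection of $T(\zeta)$, gives $(T(\zeta)-i)^{-1}=P_{21}(\zeta)+iP_{11}(\zeta)$. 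By Nussbaum's definition of measurability of the field $\zeta\mapsto T(\zeta)$ (see~\ref{subsec:Nmeas}), each $P_{ij}(\zeta)$ is a measurable field of bounded operators, hence so is $\zeta\mapsto(T(\zeta)-i)^{-1}$; being bounded in norm by $1$, it is essentially bounded, and therefore (see~\ref{subsec:ddBdd}) defines a bounded decomposable operator $R=\int_\measspace^\oplus(T(\zeta)-i)^{-1}\,d\meas(\zeta)$.

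Next I would verify that $R=(T-i)^{-1}$. Since $T$ is self-adjoint, $T-i$ maps $\dom(T)$ bijectively onto $\HEu$ with bounded inverse, so it suffices to check that $Rx\in\dom(T)$ and $(T-i)Rx=x$ for every $x\in\HEu$. Given such an $x$, the field $\zeta\mapsto(T(\zeta)-i)^{-1}x(\zeta)$ is square integrable, takes values in $\dom(T(\zeta))$, and, because $T(\zeta)(T(\zeta)-i)^{-1}=1+i(T(\zeta)-i)^{-1}$ has norm at most $2$, the field $\zeta\mapsto T(\zeta)(T(\zeta)-i)^{-1}x(\zeta)=x(\zeta)+i(T(\zeta)-i)^{-1}x(\zeta)$ is square integrable as well; by the description of $\dom(T)$ in~\ref{subsec:decompunbdd} this says exactly $Rx\in\dom(T)$, and then $(T-i)Rx$ has fibers $(T(\zeta)-i)(T(\zeta)-i)^{-1}x(\zeta)=x(\zeta)$. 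Hence $R=(T-i)^{-1}$.

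Finally, the algebraic identity $(T+i)(T-i)^{-1}=\big((T-i)+2i\big)(T-i)^{-1}=1+2iR$ exhibits the Cayley transform of $T$ as a bounded decomposable operator whose fiber over $\zeta$ is $1+2i(T(\zeta)-i)^{-1}=(T(\zeta)+i)(T(\zeta)-i)^{-1}$, which is precisely the right-hand side of~\eqref{eq:Cayley}. The only genuine obstacle is the measurability step of the second paragraph, i.e.\ converting Nussbaum's graph-projection notion of measurability of $T(\zeta)$ into ordinary measurability of the resolvent field; the explicit identity $(T(\zeta)-i)^{-1}=P_{21}(\zeta)+iP_{11}(\zeta)$ disposes of it, and the remaining points are routine bookkeeping with domains.
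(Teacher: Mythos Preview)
Your argument is correct, but it follows a different path from the paper's own proof.

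The paper argues directly at the level of the Cayley transform: it takes for granted that the field $\zeta\mapsto(T(\zeta)+i)(T(\zeta)-i)^{-1}$ defines a bounded decomposable unitary, and then simply checks that this unitary and $(T+i)(T-i)^{-1}$ agree on vectors of the form $(T-i)x$ with $x\in\dom(T)$; since such vectors are dense and both operators are bounded, equality follows. You instead drop down to the resolvent: using the explicit graph--projection identity $(T(\zeta)-i)^{-1}=P_{21}(\zeta)+iP_{11}(\zeta)$, you obtain measurability of the resolvent field straight from Nussbaum's definition, identify $\int^\oplus(T(\zeta)-i)^{-1}$ with $(T-i)^{-1}$ by a domain check, and then recover the Cayley transform as $1+2i(T-i)^{-1}$. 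Your route is longer but more self-contained on the measurability point that the paper's two-line proof leaves implicit; the paper's route is quicker once one accepts that~\eqref{eq:Cayley} is a well-defined decomposable operator. Either way the substance is the same elementary observation that the fiberwise and global Cayley transforms match on a dense set.
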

\begin{proof}
Note that the operator~\eqref{eq:Cayley} is unitary.
By evaluating at measurable vector fields $\zeta\mapsto x(t)$ belonging to $\dom(T)$, we have
\[
(T-i)x=\int_\measspace^\oplus\big(T(\zeta)-i\big)x(\zeta)\,d\meas(\zeta)
\]
and
\[
\left(\int_\measspace^\oplus\big(T(\zeta) +i\big)\big(T(\zeta)-i\big)^{-1}\,d\meas(\zeta)\right)(T-i)x
=\int_\measspace^\oplus\big(T(\zeta)+i\big)x(\zeta)\,d\meas(\zeta)=(T+i)x.
\]
Thus, the two unitary operators $(T+i)(T-i)^{-1}$ and
\[
\int_\measspace^\oplus\big(T(\zeta) +i\big)\big(T(\zeta)-i\big)^{-1}\,d\meas(\zeta),
\]
agree on a dense subset of $\HEu$, so they must be equal, as required.
\end{proof}

Now using the Cayley transform to go from unbounded self-adjoint operators to unitary operators,
we easily get the following analogues of Propositions~\ref{prop:EB} and~\ref{prop:Linfty}.
Here, for a Borel set $B$, we denote the corresponding spectral projection of also an unbounded self-adjoint operator $T$
by $E_T(B)$.

\begin{prop}\label{prop:Tsa}
Let $T=\int_\measspace^\oplus T(\zeta)\,d\meas(\zeta)$ be a closed, (possibly unbounded), self-adjoint, decomposable operator.
For every Borel subset $B\subset\Reals$, we have
\begin{equation}\label{eq:ETB}
E_T(B)=\int_\measspace^\oplus E_{T(\zeta)}(B)\,d\meas(\zeta).
\end{equation}
Moreover, for every (possibly unbounded) Borel measurable function $f:\Reals\to\Reals$, we have
\begin{equation}\label{eq:fT}
f(T)=\int_\measspace^\oplus f(T(\zeta))\,d\meas(\zeta).
\end{equation}
\end{prop}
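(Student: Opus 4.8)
The plan is to reduce both assertions to the bounded, unitary case already settled in Section~\ref{sec:bdd}, using the Cayley transform furnished by Lemma~\ref{lem:Cayley}. Put $U = (T+i)(T-i)^{-1}$; by Lemma~\ref{lem:Cayley} one has $U = \int_\measspace^\oplus U(\zeta)\,d\meas(\zeta)$ with $U(\zeta) = (T(\zeta)+i)(T(\zeta)-i)^{-1}$, a bounded, normal (in fact unitary), decomposable operator. Since $T$ is self-adjoint, the result (b) recalled in~\ref{subsec:decompunbdd} shows $T(\zeta)$ is self-adjoint for almost every $\zeta$; after discarding a null set we may assume every $T(\zeta)$ is self-adjoint, so every $U(\zeta)$ is unitary, and, using Lemmas~\ref{lem:spec} and~\ref{lem:Xnormal}, that $\sigma(U(\zeta)) \subseteq \sigma(U) \subseteq \Tcirc$ for every $\zeta$. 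Write $\varphi(t) = (t+i)(t-i)^{-1}$ for the Cayley map, a Borel isomorphism of $\Reals$ onto $\Tcirc\setminus\{1\}$ with inverse $w\mapsto i(w+1)(w-1)^{-1}$. Because $T$ and every $T(\zeta)$ are self-adjoint (not merely symmetric), $E_U(\{1\}) = 0$ and $E_{U(\zeta)}(\{1\}) = 0$; hence the spectral theorem gives, for Borel $B\subseteq\Reals$ and bounded Borel $h:\Reals\to\Cpx$,
\[
E_T(B) = E_U(\varphi(B)),\qquad h(T) = (h\circ\varphi^{-1})(U),
\]
and likewise $E_{T(\zeta)}(B) = E_{U(\zeta)}(\varphi(B))$ and $h(T(\zeta)) = (h\circ\varphi^{-1})(U(\zeta))$ for every $\zeta$, the value assigned to $h\circ\varphi^{-1}$ at the point $1$ being immaterial.

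Granting this, \eqref{eq:ETB} follows at once by applying Proposition~\ref{prop:EB} to the bounded normal decomposable operator $U$ and the Borel set $\varphi(B)$:
\[
E_T(B) = E_U(\varphi(B)) = \int_\measspace^\oplus E_{U(\zeta)}(\varphi(B))\,d\meas(\zeta) = \int_\measspace^\oplus E_{T(\zeta)}(B)\,d\meas(\zeta).
\]
Similarly, for bounded Borel $h:\Reals\to\Cpx$, Proposition~\ref{prop:Linfty} applied to $U$ gives
\[
h(T) = (h\circ\varphi^{-1})(U) = \int_\measspace^\oplus (h\circ\varphi^{-1})(U(\zeta))\,d\meas(\zeta) = \int_\measspace^\oplus h(T(\zeta))\,d\meas(\zeta);
\]
in particular this records that $\zeta\mapsto h(T(\zeta))$ is a measurable field of bounded operators for every bounded Borel $h$.

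It remains to prove \eqref{eq:fT} for a possibly unbounded real Borel function $f$. First I would check that $\zeta\mapsto f(T(\zeta))$ is a measurable field of closed operators, so that the right-hand side of \eqref{eq:fT} is meaningful: with $g_0(t) = (1+t^2)^{-1}$ and $g_1(t) = t(1+t^2)^{-1}$, the projection onto the graph of the self-adjoint operator $f(T(\zeta))$ has entries $g_0(f(T(\zeta)))$, $g_1(f(T(\zeta)))$ and $I-g_0(f(T(\zeta)))$, and each of these is a measurable field by the previous paragraph applied to the bounded Borel functions $g_0\circ f$ and $g_1\circ f$. Thus $R := \int_\measspace^\oplus f(T(\zeta))\,d\meas(\zeta)$ is a well-defined decomposable closed operator, and it must be identified with $f(T)$. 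For $n\in\Nats$ set $f_n = f\cdot 1_{\{|f|\le n\}}$, a bounded real Borel function, so $f_n(T) = \int_\measspace^\oplus f_n(T(\zeta))\,d\meas(\zeta)$ by the bounded case. For $x=(x(\zeta))\in\HEu$ the functions $\zeta\mapsto\|f_n(T(\zeta))x(\zeta)\|^2$ increase with $n$, their pointwise supremum being $\|f(T(\zeta))x(\zeta)\|^2$ when $x(\zeta)\in\dom(f(T(\zeta)))$ and $+\infty$ otherwise, so monotone convergence gives
\[
\lim_{n\to\infty}\|f_n(T)x\|^2 = \lim_{n\to\infty}\int_\measspace\|f_n(T(\zeta))x(\zeta)\|^2\,d\meas(\zeta) = \int_\measspace\Big(\lim_{n\to\infty}\|f_n(T(\zeta))x(\zeta)\|^2\Big)\,d\meas(\zeta).
\]
The left member is finite precisely when $x\in\dom(f(T))$, and the right member is finite precisely when $x(\zeta)\in\dom(f(T(\zeta)))$ for almost every $\zeta$ and $\zeta\mapsto f(T(\zeta))x(\zeta)$ is square integrable, i.e.\ (after discarding one more null set) precisely when $x\in\dom(R)$; hence $\dom(f(T)) = \dom(R)$. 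For $x$ in this common domain, $f_n(T)x\to f(T)x$ in $\HEu$ while $f_n(T(\zeta))x(\zeta)\to f(T(\zeta))x(\zeta)$ for almost every $\zeta$, and comparing along an almost-everywhere convergent subsequence gives $(f(T)x)(\zeta) = f(T(\zeta))x(\zeta) = (Rx)(\zeta)$ almost everywhere; thus $f(T) = R$, which is \eqref{eq:fT}.

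I expect the only real difficulty to lie in this last part: verifying that $\zeta\mapsto f(T(\zeta))$ is a genuine measurable field of (unbounded) operators, and then matching the domains, where the delicate point is the gap between ``for all $\zeta$'' and ``for almost every $\zeta$'' in the definition of a decomposable unbounded operator in~\ref{subsec:decompunbdd}. The spectral-projection identity and the bounded functional calculus are, by contrast, essentially just restatements of Propositions~\ref{prop:EB} and~\ref{prop:Linfty} read through the Cayley transform, and should present no obstacle.
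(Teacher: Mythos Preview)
Your argument is correct and follows the same route as the paper: reduce~\eqref{eq:ETB} to Proposition~\ref{prop:EB} via the Cayley transform exactly as the paper does, and then pass to the functional calculus. Your treatment of~\eqref{eq:fT} for unbounded $f$ (measurability of the field via the graph-projection entries, then truncation $f_n=f\cdot 1_{\{|f|\le n\}}$ and monotone convergence to match domains) is in fact more explicit than the paper's one-line ``approximate $f$ in norm by simple functions,'' which, taken literally, only covers bounded $f$; your added detail is the natural way to make that step rigorous.
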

\begin{proof}
Consider the map $h:\Reals\to\Tcirc$ given by $h(t)=\frac{t+i}{t-i}$.
Let $U=(T+i)(T-i)^{-1}$ be the Cayley transform of $T$ and let $U(\zeta)=(T(\zeta)+i)(T(\zeta)-i)^{-1}$.
Then for all $\zeta$ we have
\[
E_T(B)=E_U(h(B))\qquad\text{and}\qquad E_{T(\zeta)}(B)=E_{U(\zeta)}(h(B)).
\]
Thus, applying Proposition~\ref{prop:EB} to $U$ and $h(B)$ yields~\eqref{eq:ETB}.
Now, by approximating $f$ in norm with simple Borel measurable functions, as was done for bounded operators in the proof of
Proposition~\ref{prop:Linfty},  we obtain~\eqref{eq:fT}.
\end{proof}

Nussbaum proved (Theorem~5 of~\cite{Nu64}) that given a densely defined, decomposable, (possibly unbounded) closed operator
\[
T=\int_\measspace^\oplus T(\zeta)\,d\meas(\zeta),
\]
its absolute value is the direct integral of absolute values:
\begin{equation}\label{eq:|T|}
|T|=\int_\measspace^\oplus |T(\zeta)|\,d\meas(\zeta).
\end{equation}

\begin{prop}\label{prop:polar}
With $T$ as above, let $T=V|T|$ be the polar decomposition of $T$.
Then the polar part $V$ is decomposable and we have
\begin{equation}\label{eq:V}
V=\int_\measspace^\oplus V(\zeta)\,d\meas(\zeta),
\end{equation}
where $V(\zeta)$ is the polar part in the polar decomposition $T(\zeta)=V(\zeta)|T(\zeta)|$ of $T(\zeta)$.
\end{prop}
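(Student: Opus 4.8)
The plan is to reconstruct the polar part $V$ from data we already control, namely $|T|$ (whose decomposition is given by Nussbaum's Theorem~5, equation~\eqref{eq:|T|}) and a bounded decomposable operator obtained by composing $T$ with a truncation of $|T|$. Recall that the polar part $V$ is the unique partial isometry with initial space $\overline{\operatorname{ran}|T|}$ and final space $\overline{\operatorname{ran}T}$ satisfying $T=V|T|$, and that $V^*T=|T|$. First I would introduce, for each $n\in\Nats$, the bounded Borel function $g_n$ on $[0,\infty)$ with $g_n(s)=s^{-1}$ for $s\ge 1/n$ and $g_n(s)=0$ for $s<1/n$, and set $A_n=T\,g_n(|T|)$. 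Since $g_n(|T|)$ is bounded and self-adjoint, and $T$ maps $\dom(T)\supseteq\operatorname{ran}g_n(|T|)$ into $\HEu$ with $\|T\,g_n(|T|)x\|=\||T|g_n(|T|)x\|\le\|x\|$, the operator $A_n$ is bounded with norm $\le 1$. Moreover $A_n$ is decomposable: $g_n(|T|)$ is decomposable by Proposition~\ref{prop:Linfty} applied to the bounded normal operator $|T|$ together with~\eqref{eq:|T|} (or directly via affiliation of $T$ to $\Ec$), hence $A_n=T\,g_n(|T|)$ is a bounded operator affiliated to $\Ec$, and by Theorem~2 of~\cite{Nu64} it is decomposable with $A_n(\zeta)=T(\zeta)g_n(|T(\zeta)|)$ for almost every $\zeta$.

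Next I would identify the strong limit of $A_n$. On the one hand, $g_n(|T|)|T|$ increases strongly to the projection $E_{|T|}((0,\infty))$ onto $\overline{\operatorname{ran}|T|}$, so for $x\in\dom(T)$ we get $A_n|T|x = V g_n(|T|)|T|x \to V E_{|T|}((0,\infty))|T|x = V|T|x = Tx$; since $\{|T|x : x\in\dom(T)\}$ together with $\ker|T|$ spans a dense subspace and $A_n$ is uniformly bounded by $1$, one checks $A_n\to V$ in the strong operator topology. (Concretely: $A_n = V g_n(|T|)|T|$ on the relevant domain extends to $A_n = V E_{|T|}([1/n,\infty))$ as bounded operators, and $E_{|T|}([1/n,\infty))\to E_{|T|}((0,\infty))$ strongly, while $V E_{|T|}((0,\infty)) = V$ because $V$ annihilates $\ker|T| = \overline{\operatorname{ran}|T|}^{\perp}$.) The same computation pointwise gives $A_n(\zeta) = V(\zeta)E_{|T(\zeta)|}([1/n,\infty)) \to V(\zeta)$ strongly for almost every $\zeta$.

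Finally I would pass the strong convergence through the direct integral: by Proposition~4 of Section~II.2.3 of~\cite{Dix81}, since $A_n=\int_\measspace^\oplus A_n(\zeta)\,d\meas(\zeta)$ with $\sup_n\|A_n(\zeta)\|\le 1$ essentially bounded and $A_n(\zeta)\to V(\zeta)$ strongly a.e., the field $\zeta\mapsto V(\zeta)$ is measurable and $A_n$ converges strongly to $\int_\measspace^\oplus V(\zeta)\,d\meas(\zeta)$. Combined with $A_n\to V$ strongly, this forces $V=\int_\measspace^\oplus V(\zeta)\,d\meas(\zeta)$, which is~\eqref{eq:V}; in particular $V$ is decomposable. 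The main obstacle I anticipate is the bookkeeping around domains and strong convergence — verifying carefully that $A_n=V\,E_{|T|}([1/n,\infty))$ as genuine bounded operators (not merely on a core), that the uniform bound $\|A_n\|\le 1$ really holds, and that the exceptional null sets coming from~\eqref{eq:|T|}, from Proposition~\ref{prop:Linfty}, and from Theorem~2 of~\cite{Nu64} can be combined into one null set off of which every pointwise identity is valid. Everything else is a routine application of the spectral theorem and the cited reduction-theory facts.
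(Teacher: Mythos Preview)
Your approach is correct and genuinely different from the paper's. The paper proceeds more directly: it \emph{defines} $W=\int_\measspace^\oplus V(\zeta)\,d\meas(\zeta)$, then checks that $W|T|=T$ by evaluating on $\dom(T)$, and that $W^*W=\int_\measspace^\oplus E_{|T(\zeta)|}((0,\infty))\,d\meas(\zeta)=E_{|T|}((0,\infty))$ via Proposition~\ref{prop:Tsa}; the uniqueness of the polar decomposition then forces $W=V$. Your route instead realises $V$ as the strong limit of the bounded decomposable operators $A_n=VE_{|T|}([1/n,\infty))$, identifies $A_n(\zeta)=V(\zeta)E_{|T(\zeta)|}([1/n,\infty))$ fibrewise, and passes to the limit using Dixmier's Proposition~4 of Section~II.2.3. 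Both arguments ultimately rest on~\eqref{eq:|T|} and on the functional calculus for $|T|$; the paper's is shorter, while yours has the minor virtue of making the measurability of $\zeta\mapsto V(\zeta)$ explicit rather than tacit. One small correction: you invoke Proposition~\ref{prop:Linfty} ``applied to the bounded normal operator $|T|$'', but $|T|$ need not be bounded; the correct reference for the decomposition $g_n(|T|)=\int_\measspace^\oplus g_n(|T(\zeta)|)\,d\meas(\zeta)$ is Proposition~\ref{prop:Tsa}.
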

\begin{proof}
Let $W$ be the bounded, decomposable operator 
defined by the right-hand-side of~\eqref{eq:V}.
Then $W$ is a partial isometry.
By evaluating on vector fields $x\in\HEu$ in $\dom(T)=\dom(|T|)$, and using~\eqref{eq:|T|},
we find
\[
|T|x=\int_\measspace^\oplus |T(\zeta)|x(\zeta)\,d\meas(\zeta)
\]
and
\[
W|T|x=\int_\measspace^\oplus V(\zeta)|T(\zeta)|x(\zeta)\,d\meas(\zeta)=\int_\measspace^\oplus T(\zeta)x(\zeta)\,d\meas(\zeta)=Tx.
\]
Thus we have
\begin{equation}\label{eq:Tpolar}
W|T|=T.
\end{equation}
Moreover, $V(\zeta)^*V(\zeta)$ is the range projection $E_{|T(\zeta)|}((0,\infty))$ of $|T(\zeta)|$.
Thus,
\[
W^*W=\int_\measspace^\oplus V(\zeta)^*V(\zeta)\,d\meas(\zeta)=\int_\measspace^\oplus E_{|T(\zeta)|}((0,\infty))\,d\meas(\zeta)=E_{|T|}((0,\infty)),
\]
where the last equality is provided by Proposition~\ref{prop:Tsa}.
This, together with~\eqref{eq:Tpolar}, implies that $T=W|T|$ is the polar decomposition of $T$.
\end{proof}

Recall that for a closed, densely defined operator $T$ in $\HEu$ and a von Neumann algebra $\Mcal\subseteq B(\HEu)$,
we say that $T$ is {\em affiliated} to $\Mcal$ if, letting $T=V|T|$ denote the polar decomposition of $T$,
we have $V\in\Mcal$ and $E_{|T|}(B)\in\Mcal$
for every Borel subset $B$ of $\Reals$.

The following is the analogue for unbounded operators
of the fundamental fact about decompositions of von Neumann algebras stated in~\ref{subsec:vN}.
\begin{prop}\label{prop:decompaffil}
Suppose
\[
\Mcal=\int_\measspace^\oplus \Mcal(\zeta)\,d\meas(\zeta)
\]
is  decomposable von Neumann algebra (see~\ref{subsec:vN}).
Let $T$ be a closed (possibly unbounded) operator in $\HEu$.
Then $T$ is affiliated to $\Mcal$ if and only if (a) $T$ is decomposable and
(b) writing out the decompsition as
\begin{equation}\label{eq:Tdecomp}
T=\int_\measspace^\oplus T(\zeta)\,d\meas(\zeta),
\end{equation}
we have that $T(\zeta)$ is affiliated to $\Mcal(\zeta)$ for almost every $\zeta$.
\end{prop}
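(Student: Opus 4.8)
The plan is to reduce everything to the pieces that have already been assembled: the polar decomposition result (Proposition~\ref{prop:polar}), the spectral-projection result for self-adjoint operators (Proposition~\ref{prop:Tsa}), Nussbaum's characterization that a closed operator is decomposable iff it is affiliated to $\Ec$, and the basic fact from~\ref{subsec:vN} that a \emph{bounded} decomposable operator lies in $\Mcal$ iff almost every fiber lies in $\Mcal(\zeta)$. Throughout I will use the definition of affiliation recalled just before the proposition: $T$ is affiliated to $\Mcal$ iff, writing $T=V|T|$, we have $V\in\Mcal$ and $E_{|T|}(B)\in\Mcal$ for every Borel $B\subseteq\Reals$.

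First I would prove the forward direction. Assume $T$ is affiliated to $\Mcal$. Since $\Mcal\subseteq\Ec$, both $V$ and all the $E_{|T|}(B)$ lie in $\Ec$, so they are bounded decomposable operators; but $\dom(T)$ is spanned by vectors of the form $E_{|T|}([0,n])\HEu$, on each of which $T=V|T|$ acts, and one checks that $T$ permutes with every diagonalizable operator (because $V$ and the spectral projections do), hence $T$ is decomposable by Corollary~4 of~\cite{Nu64}; alternatively one cites Theorem~2 of~\cite{Nu64} directly since $\Mcal\subseteq\Ec$ gives $T$ affiliated to $\Ec$. Now write $T=\int_\measspace^\oplus T(\zeta)\,d\meas(\zeta)$. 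By uniqueness of the polar decomposition together with Proposition~\ref{prop:polar}, the fiber of $V$ is the polar part $V(\zeta)$ of $T(\zeta)$, and by Proposition~\ref{prop:Tsa} the fiber of $E_{|T|}(B)$ is $E_{|T(\zeta)|}(B)$. Since $V\in\Mcal$ and $E_{|T|}(B)\in\Mcal$ and these are bounded decomposable operators, the fact from~\ref{subsec:vN} gives $V(\zeta)\in\Mcal(\zeta)$ for a.e.\ $\zeta$ and, for each fixed Borel $B$, $E_{|T(\zeta)|}(B)\in\Mcal(\zeta)$ for a.e.\ $\zeta$. The one genuine subtlety here is that "a.e." depends on $B$; to get a single conull set on which $T(\zeta)$ is affiliated to $\Mcal(\zeta)$, I restrict to a countable algebra of Borel sets generating the Borel $\sigma$-algebra (e.g.\ rational rectangles, or rational-endpoint intervals in $\Reals$), intersect the countably many conull sets, and then argue that on the intersection $E_{|T(\zeta)|}(B)\in\Mcal(\zeta)$ for \emph{all} Borel $B$ by a monotone-class / $\sigma$-algebra argument exactly parallel to the one in the proof of Proposition~\ref{prop:EB} (the collection of $B$ with $E_{|T(\zeta)|}(B)\in\Mcal(\zeta)$ is a $\sigma$-algebra since $\Mcal(\zeta)$ is a von Neumann algebra and contains the generating algebra). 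Combined with $V(\zeta)\in\Mcal(\zeta)$, this says $T(\zeta)$ is affiliated to $\Mcal(\zeta)$ off a single null set.

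The converse is the reverse reading of the same chain. Assume (a) $T$ is decomposable, with decomposition~\eqref{eq:Tdecomp}, and (b) $T(\zeta)$ is affiliated to $\Mcal(\zeta)$ for a.e.\ $\zeta$. Let $T=V|T|$ be the polar decomposition. By Proposition~\ref{prop:polar}, $V=\int_\measspace^\oplus V(\zeta)\,d\meas(\zeta)$ with $V(\zeta)$ the polar part of $T(\zeta)$, and $V(\zeta)\in\Mcal(\zeta)$ a.e.\ by hypothesis; since $V$ is bounded decomposable, the fact from~\ref{subsec:vN} gives $V\in\Mcal$. Likewise, for each Borel $B\subseteq\Reals$, Proposition~\ref{prop:Tsa} gives $E_{|T|}(B)=\int_\measspace^\oplus E_{|T(\zeta)|}(B)\,d\meas(\zeta)$, and $E_{|T(\zeta)|}(B)\in\Mcal(\zeta)$ a.e.\ (using affiliation of $T(\zeta)$, which controls \emph{all} $B$ simultaneously on one null set), whence $E_{|T|}(B)\in\Mcal$. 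Thus $T$ is affiliated to $\Mcal$.

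The main obstacle, and essentially the only nonroutine point, is the quantifier issue just flagged: the cited fact from~\ref{subsec:vN} hands back a null set depending on the chosen decomposable operator, so in the forward direction one must package the uncountably many spectral projections $\{E_{|T|}(B)\}_{B}$ into countably much data and then bootstrap back to all Borel $B$ fiberwise — this is why the $\sigma$-algebra argument from Proposition~\ref{prop:EB} gets reused. In the converse direction the analogous step is free, because "affiliated" already encodes a statement about all $B$ at once. Everything else is a direct citation of Propositions~\ref{prop:polar} and~\ref{prop:Tsa}, Nussbaum's decomposability criteria, and the bounded-operator membership fact.
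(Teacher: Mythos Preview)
Your proposal is correct and follows essentially the same route as the paper's proof: both directions are reduced to Propositions~\ref{prop:polar} and~\ref{prop:Tsa} together with the bounded-operator membership criterion from~\ref{subsec:vN}, and in the forward direction the null-set-per-$B$ issue is handled by passing to a countable generating family of intervals and then bootstrapping. The only cosmetic differences are that the paper treats the converse first and obtains decomposability of $T$ via the ``permutes with diagonalizables'' argument (Corollary~4 of~\cite{Nu64}), whereas you also note the slightly slicker alternative of invoking Theorem~2 of~\cite{Nu64} directly from $\Mcal\subseteq\Ec$.
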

\begin{proof}
First we show $\Leftarrow$.
Suppose $T$ is decomposable and is written as in~\eqref{eq:Tdecomp}.
Let $T=V|T|$ and $T(\zeta)=V(\zeta)|T(\zeta)|$ be the polar decompositions.
For almost every $\zeta$ we have $V(\zeta)\in\Mcal(\zeta)$;
using Proposition~\ref{prop:polar}, we have $V\in\Mcal$.
Similarly, for every Borel subset $B\subseteq\Reals$, we have $E_{|T(\zeta)|}(B)\in\Mcal(\zeta)$ for almost every $\zeta$, so using Proposition~\ref{prop:Tsa},
we find $E_{|T|}(B)\in\Mcal$.
Thus, $T$ is affiliated to $\Mcal$.

To show $\Rightarrow$, we suppose $T$ is affiliated to $\Mcal$.
Let $T=V|T|$ be the polar decomposition of $T$.
Since $V\in\Mcal$ and all spectral projections $E_{|T|}(B)$ are in $\Mcal$,
they all commute with all the diagonalizable operators;
from this, we easily see that $T$ permutes with all diagonalizable operators.
By Nussbaum's Corollary 4 of~\cite{Nu64}, $T$ is decomposable;
we write it as in~\eqref{eq:Tdecomp}.
Let $T(\zeta)=V(\zeta)|T(\zeta)|$ be the polar decomposition.
Since $V\in\Mcal$, using Proposition~\ref{prop:polar} we get $V(\zeta)\in\Mcal(\zeta)$ for almost every $\zeta$.
Similarly, but using Proposition~\ref{prop:Tsa}, for every Borel set $B$,
since $E_{|T|}(B)\in\Mcal$, there is a null set $N_B$ such that for all $\zeta\notin N_B$, we have $E_{|T(\zeta)|}(B)\in\Mcal(\zeta)$.
Let $N$ be the union of the sets $N_B$ as $B$ ranges over the open intervals with rational endpoints in $\Reals$.
Then $N$ is a null set and for all $\zeta\notin N$ we have $E_{|T(\zeta)|}((a,b))\in\Mcal(\zeta)$ for all rational numbers $a<b$.
From this, we deduce $E_{|T(\zeta)|}(B)\in\Mcal(\zeta)$ for all Borel subsets $B\subseteq\Reals$.
Thus, we have that $T(\zeta)$ is affiliated to $\Mcal(\zeta)$ for almost every $\zeta$.
\end{proof}

\section{Tracial von Neumann algebras and Brown measure}
\label{sec:Brown}

In this section, we will specialize to the case of operators in or affiliated to tracial von Neumann algebras,
by which we mean, pairs $(\Mcal,\tau)$ consisting of a von Neumann algebra $\Mcal$ and a fixed normal, faithful, tracial state $\tau$ on it.
Recall that, given such a pair, we let $\exp(\Lc^1)(\Mcal,\tau)$ denote the bimodule of closed operators $T$ affiliated to $\Mcal$ such
that $\tau(\log^+(|T|))<\infty$.

Here is a technical lemma that we will need later; it is convenient to prove it here.
\begin{lem}\label{lem:cont}
Let $T\in\exp(\mathcal{L}_1)(\mathcal{M},\tau)$.
Then the mapping $\lambda\mapsto\Delta_\tau(|T-\lambda|^2+1)$ ($\lambda\in\Cpx$) is continuous.
\end{lem}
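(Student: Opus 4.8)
The plan is to reduce everything to the continuity of the function
\[
f(\lambda):=\tau\big(\log(|T-\lambda|^2+1)\big)\qquad(\lambda\in\Cpx),
\]
since $|T-\lambda|^2+1\ge1$ and hence $\Delta_\tau(|T-\lambda|^2+1)=\exp\big(f(\lambda)\big)$; note that $T-\lambda\in\exp(\Lc^1)(\Mcal,\tau)$ because $\exp(\Lc^1)(\Mcal,\tau)$ is an $\Mcal$-bimodule, so $f(\lambda)$ is finite and the displayed identity makes sense. The starting point is the elementary integral representation
\[
\log(1+x)=\int_0^\infty\Big(\frac1{1+t}-\frac1{1+x+t}\Big)\,dt\qquad(x\ge0).
\]
Applying this in the Borel functional calculus of the positive operator $|T-\lambda|^2$, and noting that the integrand becomes the positive bounded operator $\frac1{1+t}-\big(|T-\lambda|^2+1+t\big)^{-1}=(1+t)^{-1}|T-\lambda|^2\big(|T-\lambda|^2+1+t\big)^{-1}$, normality of $\tau$ together with the monotone convergence theorem give
\[
f(\lambda)=\int_0^\infty\phi_\lambda(t)\,dt,\qquad
\phi_\lambda(t):=\frac1{1+t}-\tau\Big(\big(|T-\lambda|^2+1+t\big)^{-1}\Big)\ge 0.
\]

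Next I would fix $t$ and show $\lambda\mapsto\phi_\lambda(t)$ is continuous; since $\tau$ is norm continuous on bounded operators, this reduces to norm continuity of $\lambda\mapsto\big(|T-\lambda|^2+c\big)^{-1}$ for fixed $c=1+t>0$. The delicate point is that passing from $(T-\lambda_0)^*(T-\lambda_0)$ to $(T-\lambda)^*(T-\lambda)$ is a perturbation involving the \emph{unbounded} operators $T$ and $T^*$, so ordinary resolvent perturbation does not apply to $|T-\lambda|^2$ directly. The remedy is to pass to the self-adjoint dilation
\[
S_\lambda:=\begin{pmatrix}0&T-\lambda\\ (T-\lambda)^*&0\end{pmatrix}\qquad\text{on }\HEu\oplus\HEu,
\]
for which $S_\lambda-S_{\lambda_0}$ is a \emph{bounded} self-adjoint operator of norm $|\lambda-\lambda_0|$. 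The resolvent identity then shows $(S_\lambda-is)^{-1}$ depends norm continuously on $\lambda$ for $s=\sqrt c$; consequently so does $(S_\lambda^2+c)^{-1}=(S_\lambda-is)^{-1}(S_\lambda+is)^{-1}$, whose lower-right corner is $\big(|T-\lambda|^2+c\big)^{-1}$ by a standard block-matrix computation. This yields the pointwise continuity of $\phi_{(\cdot)}(t)$.

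Finally I would produce, on each disc $\{|\lambda|\le M\}$, a $\lambda$-independent integrable majorant for $\phi_\lambda(t)$. From $\|(T-\lambda)\xi\|\le\|T\xi\|+M\|\xi\|$ one gets the form inequality $|T-\lambda|^2\le 2|T|^2+2M^2=:B$, hence $\big(|T-\lambda|^2+c\big)^{-1}\ge(B+c)^{-1}$, and therefore
\[
0\le\phi_\lambda(t)=\tau\Big((1+t)^{-1}|T-\lambda|^2\big(|T-\lambda|^2+1+t\big)^{-1}\Big)\le(1+t)^{-1}\tau\big(B(B+1+t)^{-1}\big)=:\Phi(t).
\]
Using the integral representation once more, $\int_0^\infty\Phi(t)\,dt=\tau\big(\log(B+1)\big)=\tau\big(\log(2|T|^2+2M^2+1)\big)$, which is finite because $T\in\exp(\Lc^1)(\Mcal,\tau)$ gives $\tau(\log^+|T|)<\infty$ and $\log(2x^2+c)\le 2\log^+x+\mathrm{const}$. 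By the dominated convergence theorem $f$ is continuous on $\{|\lambda|\le M\}$, and since $M$ was arbitrary, $f$---and hence $\lambda\mapsto\Delta_\tau(|T-\lambda|^2+1)=\exp\big(f(\lambda)\big)$---is continuous on $\Cpx$.

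I expect the main obstacle to be exactly the norm continuity of $\lambda\mapsto\big(|T-\lambda|^2+c\big)^{-1}$: the dilation $S_\lambda$ is the device that turns an unbounded perturbation into a bounded self-adjoint one, and the block-matrix identification of the corner must be justified for the unbounded operator $S_\lambda$ (legitimate since $\pm is$ lies in the resolvent set of $S_\lambda$ and $T-\lambda$ is closed and densely defined). The remaining ingredients---the two applications of the integral representation, the interchange of $\tau$ with $\int_0^\infty$, and the domination---are routine.
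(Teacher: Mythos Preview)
Your argument is correct, but it is considerably more elaborate than the paper's. The paper proceeds by a short algebraic trick: after shifting so as to test continuity at $\lambda_0=0$, it uses multiplicativity of $\Delta_\tau$ to write
\[
\Delta_\tau(|T-\lambda|^2+1)=\Delta_\tau(|T|^2+1)\cdot\Delta_\tau\big(1+(1+|T|^2)^{-1/2}(|T-\lambda|^2-|T|^2)(1+|T|^2)^{-1/2}\big),
\]
and then observes, via the polar decomposition $T=U|T|$, that the middle operator $(1+|T|^2)^{-1/2}(|T-\lambda|^2-|T|^2)(1+|T|^2)^{-1/2}$ is bounded with norm at most $2|\lambda|+|\lambda|^2$. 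The second factor therefore lies between $1-2|\lambda|-|\lambda|^2$ and $1+2|\lambda|+|\lambda|^2$, which gives continuity (indeed a local Lipschitz estimate) immediately.

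By contrast, your route---integral representation of $\log(1+x)$, the self-adjoint dilation $S_\lambda$ to obtain norm continuity of $\lambda\mapsto(|T-\lambda|^2+c)^{-1}$, and a dominated-convergence argument with a majorant built from $|T-\lambda|^2\le2|T|^2+2M^2$---is a genuinely different and heavier proof. It works, and each ingredient is sound (the form inequality together with operator monotonicity of $-(x+c)^{-1}$ does give the resolvent comparison you use, and the block identification of $(S_\lambda^2+c)^{-1}$ is standard for closed densely defined $T-\lambda$). What you gain is a proof that never invokes multiplicativity of the Fuglede--Kadison determinant and that packages the unbounded perturbation issue cleanly via the dilation; what the paper's approach buys is brevity and an explicit quantitative bound with essentially no analysis beyond the norm estimate.
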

\begin{proof}
By shifting $T,$ it suffices to prove that our mapping is continuous at $0$.
To see this, note that
\begin{align*}
\Delta(|T-\lambda|^2+1)&=\Delta(|T|^2+1)\Delta((1+|T|^2)^{-\frac12}(|T-\lambda|^2+1)(1+|T|^2)^{-\frac12}) \\
&=\Delta(|T|^2+1)\Delta(1+(1+|T|^2)^{-\frac12}(|T-\lambda|^2-|T|^2)(1+|T|^2)^{-\frac12}).
\end{align*}
It will, thus, suffice to show
\begin{equation}\label{eq:lam0}
\lim_{\lambda\to0}\Delta(1+(1+|T|^2)^{-\frac12}(|T-\lambda|^2-|T|^2)(1+|T|^2)^{-\frac12})=1.
\end{equation}
It is immediate that
\[
|T-\lambda|^2-|T|^2=|\lambda|^2-\lambda T^*-\bar{\lambda}T=|\lambda|^2-\lambda|T|U^*-\bar{\lambda}U|T|,
\]
where $T=U|T|$ is the polar decomposition.
Thus,
\begin{multline*}
(1+|T|^2)^{-\frac12}(|T-\lambda|^2-|T|^2)(1+|T|^2)^{-\frac12}= \\
\begin{aligned}[t]=|\lambda|^2&(1+|T|^2)^{-1}-\lambda\left(\frac{|T|}{(1+|T|^2)^{\frac12}}\right)U^*\left(\frac1{(1+|T|^2)^{\frac12}}\right) \\[1ex]
&-\bar{\lambda}\left(\frac1{(1+|T|^2)^{\frac12}}\right)U\left(\frac{|T|}{(1+|T|^2)^{\frac12}}\right).
\end{aligned}
\end{multline*}
Thus, we have the estimate of operator norm
\[
\Big\|(1+|T|^2)^{-\frac12}(|T-\lambda|^2-|T|^2)(1+|T|^2)^{-\frac12}\Big\|\leq 2|\lambda|+|\lambda|^2.
\]
So when $|\lambda|\le\frac13$, we have
\begin{multline*}
\log(1-2|\lambda|-|\lambda|^2)\le\log\Delta(1+(1+|T|^2)^{-\frac12}(|T-\lambda|^2-|T|^2)(1+|T|^2)^{-\frac12}) \\
\le\log(1+2|\lambda|+|\lambda|^2),
\end{multline*}
which proves~\eqref{eq:lam0}.
This concludes the proof.
\end{proof}

For the remainder of this section, we suppose
$\Mcal\subseteq B(\HEu)$ is a von Neumann algebra equipped with a normal, faithful tracial state $\tau$
and that $\Mcal\subseteq\Ec$ consists of decomposable operators.
Using Dixmier's reduction theory~\cite{Dix81} (described in~\ref{subsec:finvN}),
and by modifying the measure $\meas$ to be a probability measure, we may write
\[
\Mcal=\int_\measspace^\oplus\Mcal(\zeta)\,d\meas(\zeta),\quad\text{and}\quad
\tau=\int_\measspace^\oplus\tau_\zeta\,d\meas(\zeta),
\]
for tracial von Neumann algebras $(\Mcal(\zeta),\tau_\zeta)$, with $\Mcal(\zeta)\subseteq B(\HEu(\zeta))$.
By Proposition~\ref{prop:decompaffil}
if $T$ is affiliated to $\Mcal$, then $T$ is decomposable and may be written 
\begin{equation}\label{eq:Tdecomp2}
T=\int_\measspace^\oplus T(\zeta)\,d\meas(\zeta),
\end{equation}
with $T(\zeta)$ affiliated to $\Mcal(\zeta)$ for almost every $\zeta$.

For an element $T\in\exp(\Lc^1)(\Mcal,\tau)$, we let $\nu_T$ denote the Brown measure of $T$.
For any self-adjoint, closed operator $T$ affiliated to $\Mcal$, we let $\mu_T$ denote the distribution of $T$,
namely, $\tau$ composed with spectral measure of $T$.
In fact, when $T\in\exp(\Lc^1)(\Mcal,\tau)$ is self-adjoint, we have $\nu_T=\mu_T$ (this follows immediately from the characterization provided by
Equations~\eqref{eq:char1} and~\eqref{eq:char2}) so there would be no conflict in using the same notation for both;
but for clarity of meaning, we will distinguish them.

In this setting, Proposition~\ref{prop:Tsa} yields the following formula for spectral distributions of self-adjoint (possibly unbounded) operators.
\begin{prop}\label{prop:nuT}
Let $T$ be self-adjoint and affiliated to $\Mcal$.
Then for every Borel subset $B$ of $\Reals$, the function $\zeta\mapsto\mu_{T(\zeta)}(B)$ is measurable and
\[
\mu_T(B)=\int_\measspace\mu_{T(\zeta)}(B)\,d\meas(\zeta).
\]
\end{prop}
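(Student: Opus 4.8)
The plan is to deduce this from Proposition~\ref{prop:Tsa}, which already tells us that $E_T(B)=\int_\measspace^\oplus E_{T(\zeta)}(B)\,d\meas(\zeta)$ for every Borel set $B\subseteq\Reals$. Since $\mu_T(B)=\tau(E_T(B))$ by definition, and $\tau=\int_\measspace^\oplus\tau_\zeta\,d\meas(\zeta)$ is a decomposable trace, the formula for the trace of a decomposable positive operator (recalled in the subsection on measurable fields of traces) gives immediately
\[
\mu_T(B)=\tau(E_T(B))=\int_\measspace\tau_\zeta\big(E_{T(\zeta)}(B)\big)\,d\meas(\zeta)=\int_\measspace\mu_{T(\zeta)}(B)\,d\meas(\zeta),
\]
which is exactly the claim. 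The measurability of $\zeta\mapsto\mu_{T(\zeta)}(B)=\tau_\zeta(E_{T(\zeta)}(B))$ is part of the definition of a measurable field of traces, applied to the decomposable positive operator $E_T(B)=\int_\measspace^\oplus E_{T(\zeta)}(B)\,d\meas(\zeta)\in\Mcal$.

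First I would note that since $T$ is self-adjoint and affiliated to $\Mcal$, Proposition~\ref{prop:decompaffil} ensures $T$ is decomposable with $T=\int_\measspace^\oplus T(\zeta)\,d\meas(\zeta)$ and each $T(\zeta)$ self-adjoint and affiliated to $\Mcal(\zeta)$ for almost every $\zeta$; in particular each $\mu_{T(\zeta)}$ is a well-defined probability measure on $\Reals$. Then I would invoke Proposition~\ref{prop:Tsa} to identify $E_T(B)$ with the direct integral of the $E_{T(\zeta)}(B)$, observe that $E_T(B)\in\Mcal$ (each $E_{T(\zeta)}(B)\in\Mcal(\zeta)$, using affiliation), and finally apply the integral formula for $\tau$ on $\Mcal^+$ to conclude. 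One should also remark that, after redefining $T(\zeta)$ on a null set if necessary, one may take each $T(\zeta)$ self-adjoint for every $\zeta$, so that the identities hold pointwise and not merely almost everywhere.

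There is essentially no obstacle here: every ingredient has been assembled in the preceding sections. The only point requiring a word of care is that the formula $\mu_T(B)=\int_\measspace\mu_{T(\zeta)}(B)\,d\meas(\zeta)$ should be understood as holding for each fixed Borel set $B$, with the null set on which pointwise identities may fail depending a priori on $B$; but since all quantities involved are $\tau$- and $\tau_\zeta$-traces of projections and the identity $E_T(B)=\int^\oplus E_{T(\zeta)}(B)\,d\meas(\zeta)$ from Proposition~\ref{prop:Tsa} already absorbs that null set, the displayed equality is clean. This is why I regard the statement as a direct corollary of Proposition~\ref{prop:Tsa} together with the definition of a measurable field of traces, and I would present the proof in just a few lines along the lines above.
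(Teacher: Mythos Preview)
Your proposal is correct and matches the paper's own approach: the paper presents this proposition as an immediate consequence of Proposition~\ref{prop:Tsa} (stating that ``In this setting, Proposition~\ref{prop:Tsa} yields the following formula'') without giving further details, and your argument supplies exactly the missing lines by combining the decomposition $E_T(B)=\int_\measspace^\oplus E_{T(\zeta)}(B)\,d\meas(\zeta)$ with the trace decomposition $\tau=\int_\measspace^\oplus\tau_\zeta\,d\meas(\zeta)$.
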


We let $\Lc^1(\Mcal,\tau)$ denote the set of all closed operators affiliated to $\Mcal$ such that $\tau(|T|)<\infty$.
\begin{lem}\label{lem:L1}
Suppose $T\in\Lc^1(\Mcal,\tau)$ and $T\ge0$;
use the decomposition~\eqref{eq:Tdecomp2}.
Then $T(\zeta)\in\Lc^1(\Mcal(\zeta),\tau_\zeta)$ for almost every $\zeta$
and
\begin{equation}\label{eq:tauT}
\tau(T)=\int_\measspace\tau_\zeta(T(\zeta))\,d\meas(\zeta).
\end{equation}
\end{lem}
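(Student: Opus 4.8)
The plan is to reduce to the case of bounded operators by truncating $T$ with bounded Borel functions, and then to pass to the limit using the monotone convergence theorem twice: once to recover the traces $\tau$ and $\tau_\zeta$ on positive unbounded operators from their values on the truncations, and once for the integral over $\measspace$.

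First I would fix, for $n\in\Nats$, the bounded Borel function $f_n\colon[0,\infty)\to[0,\infty)$ given by $f_n(t)=\min(t,n)$, so that $0\le f_1\le f_2\le\cdots$ and $f_n(t)\nearrow t$ for every $t\ge0$. Since $T\ge0$ is self-adjoint and affiliated to $\Mcal$, the operator $f_n(T)$ is a bounded positive element of $\Mcal$, and by Proposition~\ref{prop:Tsa} it is decomposable with $f_n(T)=\int_\measspace^\oplus f_n(T(\zeta))\,d\meas(\zeta)$; here each $f_n(T(\zeta))$ is a bounded positive element of $\Mcal(\zeta)$ because $T(\zeta)$ is affiliated to $\Mcal(\zeta)$ for almost every $\zeta$ (Proposition~\ref{prop:decompaffil}). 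Applying the defining formula for a measurable field of traces to $f_n(T)\in\Mcal^+$ then shows that $\zeta\mapsto\tau_\zeta(f_n(T(\zeta)))$ is measurable and
\[
\tau(f_n(T))=\int_\measspace\tau_\zeta(f_n(T(\zeta)))\,d\meas(\zeta).
\]

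Next I would use the standard fact that for a positive self-adjoint operator $S$ affiliated to a tracial von Neumann algebra, $\tau(S)\in[0,\infty]$ is recovered as $\lim_n\tau(f_n(S))$ (equivalently, $\int_{[0,\infty)}t\,d\mu_S(t)$), with $S$ lying in $\Lc^1$ precisely when this limit is finite. Applying this to $S=T$ and to $S=T(\zeta)$ for almost every $\zeta$, and noting that $\zeta\mapsto\tau_\zeta(f_n(T(\zeta)))$ is an increasing sequence of nonnegative measurable functions converging pointwise to the $[0,\infty]$-valued measurable function $\zeta\mapsto\tau_\zeta(T(\zeta))$, two applications of the monotone convergence theorem give
\[
\tau(T)=\lim_{n\to\infty}\tau(f_n(T))=\lim_{n\to\infty}\int_\measspace\tau_\zeta(f_n(T(\zeta)))\,d\meas(\zeta)=\int_\measspace\tau_\zeta(T(\zeta))\,d\meas(\zeta),
\]
which is~\eqref{eq:tauT}. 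Finally, since $T\in\Lc^1(\Mcal,\tau)$ the left-hand side is finite, so $\tau_\zeta(T(\zeta))<\infty$ for almost every $\zeta$; combined with the affiliation of $T(\zeta)$ to $\Mcal(\zeta)$ this yields $T(\zeta)\in\Lc^1(\Mcal(\zeta),\tau_\zeta)$ for almost every $\zeta$.

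I expect the only point requiring real care to be the bookkeeping that truncation, direct-integral decomposition and trace commute in the right way: that $f_n(T)=\int_\measspace^\oplus f_n(T(\zeta))\,d\meas(\zeta)$ (which is exactly Proposition~\ref{prop:Tsa}), and that $\tau$ and each $\tau_\zeta$ are genuinely recovered from the bounded truncations by monotone convergence, so that the two limit passages can be chained. None of this is deep, but it is where one must be precise.
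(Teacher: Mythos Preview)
Your argument is correct and follows essentially the same approach as the paper: truncate $T$ by an increasing sequence of bounded positive functions, use the decomposition of the trace on the resulting bounded operators, and pass to the limit via the Monotone Convergence Theorem. The only cosmetic difference is that the paper truncates with simple functions and routes the bounded step through Proposition~\ref{prop:nuT} on spectral distributions, whereas you truncate with $f_n(t)=\min(t,n)$ and invoke the trace decomposition for bounded elements of $\Mcal^+$ directly; both are equally valid.
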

\begin{proof}
We have $T(\zeta)\ge0$ for almost every $\zeta$.
Since the decompositions of $T$ and $\tau$ are measurable, the function $\zeta\mapsto\tau_\zeta(T(\zeta))$ is measurable.
Let $(f_n)_{n=1}^\infty$ be an increasing sequence of simple functions, each having finitely many values,
that converges pointwise to the identity function $t\mapsto t$ on $[0,\infty)$.
Then $\tau(f_n(T))$ is increasing in $n$ and converges to $\tau(T)$ while for every $\zeta$ such that $T(\zeta)\ge0$,
the sequence $\tau_\zeta(f_n(T(\zeta)))$ is increasing in $n$ and convergest to $\tau_\zeta(T(\zeta))$.
Now fixing $n$ and writing $f_n=\sum_{k=1}^ma_k1_{B_k}$ for some $a_k\ge0$ and some Borel sets $B_k$,
using Proposition~\ref{prop:nuT}, we find
\[
\tau(f_n(T))=\sum_ka_k\mu_T(B_k)=\sum_ka_k\int_\measspace\mu_{T(\zeta)}(B_k)\,d\meas(\zeta)=\int_\measspace\tau_\zeta(f_n(T(\zeta)))\,d\meas(\zeta).
\]
Letting $n\to\infty$, the Monotone Convergence Theorem implies the equality~\eqref{eq:tauT}.
This, in turn, impies $\tau_\zeta(T(\zeta))<\infty$ for almost every $\zeta$.
\end{proof}

Now we turn to the $\exp(\Lc^1)$ class and the Fuglede--Kadison determinant.

\begin{lem}\label{lem:intFK}
Let $T\in\exp(\Lc^1)(\Mcal,\tau)$ and use the decomposition~\eqref{eq:Tdecomp2}.
Then $T(\zeta)\in\exp(\Lc^1)(\Mcal(\zeta),\tau_\zeta)$ for almost every $\zeta$.
Moreover, we have
\begin{align}
\tau(\log^+(|T|))&=\int_\measspace^\oplus\tau_\zeta\big(\log^+(|T(\zeta)|)\big)\,d\meas(\zeta) \label{eq:intlog+bd} \\
\log \Delta_\tau (T)&=\int_\measspace\log\Delta_{\tau_\zeta}(T(\zeta))\,d\meas(\zeta). \label{eq:intFK}
\end{align}
\end{lem}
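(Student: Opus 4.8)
The plan is to reduce everything to three facts already in hand: Nussbaum's formula~\eqref{eq:|T|} for the absolute value of a decomposable operator, the functional calculus of Proposition~\ref{prop:Tsa}, and the trace formula of Lemma~\ref{lem:L1}; the $\eps$-regularization built into the Fuglede--Kadison determinant supplies the rest. First I would record the decompositions of the positive operators involved. Since $T\in\exp(\Lc^1)(\Mcal,\tau)$ is densely defined and affiliated to $\Mcal$, Proposition~\ref{prop:decompaffil} together with the results recalled in~\ref{subsec:decompunbdd} shows $T$ is decomposable as in~\eqref{eq:Tdecomp2} with each $T(\zeta)$ densely defined and affiliated to $\Mcal(\zeta)$ for a.e.\ $\zeta$. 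By~\eqref{eq:|T|}, $|T|=\int_\measspace^\oplus|T(\zeta)|\,d\meas(\zeta)$, so Proposition~\ref{prop:Tsa}, applied to the positive self-adjoint operator $|T|$ and any finite-valued Borel function $g$, gives $g(|T|)=\int_\measspace^\oplus g(|T(\zeta)|)\,d\meas(\zeta)$. Taking $g=\log^+$ (nonnegative, possibly unbounded, finite-valued), the operator $\log^+|T|$ is positive, affiliated to $\Mcal$, and has $\tau(\log^+|T|)<\infty$ by definition of $\exp(\Lc^1)$; hence $\log^+|T|\in\Lc^1(\Mcal,\tau)$ and Lemma~\ref{lem:L1} applies to it, yielding at once the measurability of $\zeta\mapsto\tau_\zeta(\log^+|T(\zeta)|)$, the identity~\eqref{eq:intlog+bd}, and the fact that $\tau_\zeta(\log^+|T(\zeta)|)<\infty$ for a.e.\ $\zeta$. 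Combined with $T(\zeta)$ affiliated to $\Mcal(\zeta)$, this gives $T(\zeta)\in\exp(\Lc^1)(\Mcal(\zeta),\tau_\zeta)$ for a.e.\ $\zeta$, so that each $\log\Delta_{\tau_\zeta}(T(\zeta))$ is defined and lies in $[-\infty,\infty)$.

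For~\eqref{eq:intFK} I would work through the regularization $\log\Delta_\tau(T)=\lim_{\eps\to0^+}\tau\big(\log(|T|+\eps)\big)$. Fix $\eps\in(0,1)$. Applying Proposition~\ref{prop:Tsa} to the finite-valued Borel functions $s\mapsto\log^+(s+\eps)$ and $s\mapsto\log^-(s+\eps)$ shows that $\log^+(|T|+\eps)$ (positive, and in $\Lc^1$ since $\log^+(s+\eps)\le\log^+s+\eps$) and $\log^-(|T|+\eps)$ (positive and bounded by $|\log\eps|$) are decomposable with $\zeta$-components $\log^{\pm}(|T(\zeta)|+\eps)$. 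Applying Lemma~\ref{lem:L1} to each and subtracting gives, with all quantities finite,
\[
\tau\big(\log(|T|+\eps)\big)=\int_\measspace\tau_\zeta\big(\log(|T(\zeta)|+\eps)\big)\,d\meas(\zeta).
\]
Now let $\eps\downarrow0$. For each $\zeta$, $\tau_\zeta(\log(|T(\zeta)|+\eps))$ decreases to $\log\Delta_{\tau_\zeta}(T(\zeta))$ (monotone because $s\mapsto\log(s+\eps)$ is), and for $\eps\le1$ it is bounded above by the $\meas$-integrable function $\tau_\zeta(\log^+|T(\zeta)|)+1$; hence the monotone convergence theorem for decreasing sequences with an integrable majorant, applied along $\eps=1/n$, together with the definition of $\Delta_\tau$, gives
\[
\log\Delta_\tau(T)=\lim_{n\to\infty}\tau\big(\log(|T|+\tfrac1n)\big)=\int_\measspace\log\Delta_{\tau_\zeta}(T(\zeta))\,d\meas(\zeta),
\]
which is~\eqref{eq:intFK}. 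The right-hand integral is well-defined in $[-\infty,\infty)$ because its positive part is dominated by $\int_\measspace\tau_\zeta(\log^+|T(\zeta)|)\,d\meas(\zeta)=\tau(\log^+|T|)<\infty$, and $\zeta\mapsto\log\Delta_{\tau_\zeta}(T(\zeta))$ is measurable, being a pointwise limit of the measurable functions $\zeta\mapsto\tau_\zeta(\log(|T(\zeta)|+\tfrac1n))$.

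I expect the main obstacle to be the bookkeeping around the degenerate case $\log\Delta_\tau(T)=-\infty$, which occurs when $\tau(\log^-|T|)$ is infinite, as happens for instance when $T$ is not injective. This is why I would keep the $\eps$-regularization rather than simply split $\log|T|=\log^+|T|-\log^-|T|$: each interchange of limit and integral must be justified by a genuine monotonicity-plus-majorant argument (the $\log^-$ contribution being controlled only in the limit), and both sides of~\eqref{eq:intFK} must be read consistently in $[-\infty,\infty)$. Everything else is a routine application of Propositions~\ref{prop:Tsa} and~\ref{prop:decompaffil} and Lemma~\ref{lem:L1}.
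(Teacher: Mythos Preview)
Your proposal is correct and follows essentially the same route as the paper: reduce to $|T|$ via~\eqref{eq:|T|}, apply Proposition~\ref{prop:Tsa} with $g=\log^+$ and Lemma~\ref{lem:L1} to get~\eqref{eq:intlog+bd} and the a.e.\ membership in $\exp(\Lc^1)$, then use the $\eps$-regularization $\log(|T|+\eps)$ together with Proposition~\ref{prop:Tsa} and Lemma~\ref{lem:L1} at each fixed $\eps$, and pass to the limit by monotone convergence. The only cosmetic difference is that where the paper makes $\log(|T|+\eps)$ positive by adding the constant $-\log\eps$ before invoking Lemma~\ref{lem:L1}, you instead split $\log(|T|+\eps)$ into its $\log^\pm$ parts and treat each separately; both devices serve the same purpose, and your extra care with the majorant and the $-\infty$ case is welcome.
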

\begin{proof}
By Theorem~5 of~\cite{Nu64} --- see Equation~\eqref{eq:|T|} ---
we may without loss of generality assume $T\ge0$, which entails $T(\zeta)\ge0$ for almost every $\zeta$.
Now using Proposition~\ref{prop:Tsa}, we get
\[
\log^+(T)=\int_\measspace^\oplus\log^+(T(\zeta))\,d\meas(\zeta).
\]
Since $T\in\exp(\Lc^1)(\Mcal,\tau)$, we have $\log^+(T)\in\Lc^1(\Mcal,\tau)$.
Now Lemma~\ref{lem:L1} yields~\eqref{eq:intlog+bd}
and we deduce $\log^+(T(\zeta))\in\Lc^1(\Mcal(\zeta),\tau_\zeta)$, namely, $T(\zeta)\in\exp(\Lc^1)(\Mcal(\zeta),\tau_\zeta)$,
for almost every $\zeta$.

Now we show~\eqref{eq:intFK}.
Let $\eps>0$.
Using the function $f_\eps(t)=\log(t+\eps)$ ($t\ge0$) and using Proposition~\ref{prop:Tsa} to apply the functional calculus to $T$, we get
\begin{equation}\label{eq:logTeps}
\log(T+\eps)=\int_\measspace^\oplus\log(T(\zeta)+\eps)\,d\meas(\zeta).
\end{equation}
Now Lemma~\ref{lem:L1} applies (if we first add $-\log\eps$ to both sides of~\eqref{eq:logTeps} to make the operators positive)
and we have
\[
\tau\big(\log(T+\eps)\big)=\int_\measspace\tau_\zeta\big(\log(T(\zeta)+\eps)\big)\,d\meas(\zeta).
\]
Letting $\eps\to0$ and using the Monotone Convergence Theorem, we get
\[
\log\Delta_\tau(T)=\tau\big(\log(T)\big)=\int_\measspace\tau_\zeta\big(\log(T(\zeta))\big)\,d\meas(\zeta)=\int_\measspace\log\Delta_{\tau_\zeta}(T(\zeta))\,d\meas(\zeta),
\]
as required.
\end{proof}

Recall that, for $T\in\exp(\Lc^1)(\Mcal,\tau)$, we let $\nu_T$ denote the Brown measure of $T$.

\begin{lem}\label{lem:numeas}
Let $T\in\exp(\Lc^1)(\Mcal,\tau)$ and use the decomposition~\eqref{eq:Tdecomp2}.
Then for every Borel subset $B\subseteq\Cpx$ the mapping $\zeta\mapsto\nu_{T(\zeta)}(B)$ is measurable.
\end{lem}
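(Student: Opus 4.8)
The plan is to reduce the assertion to test functions and then express the pairing of $\nu_{T(\zeta)}$ with a test function through Fuglede--Kadison determinants of regularized operators, so that the measurability already obtained in Lemmas~\ref{lem:L1} and~\ref{lem:intFK} can be brought to bear. First I would note that it suffices to prove that $\zeta\mapsto\int_\Cpx\varphi\,d\nu_{T(\zeta)}$ is measurable for every real-valued $\varphi\in C_c^\infty(\Cpx)$. Granting this, if $U\subseteq\Cpx$ is open one chooses an increasing sequence $\varphi_n\in C_c^\infty(\Cpx)$ with $0\le\varphi_n\le1$ and $\varphi_n\uparrow 1_U$, so that $\nu_{T(\zeta)}(U)=\lim_n\int\varphi_n\,d\nu_{T(\zeta)}$ is measurable in $\zeta$ by monotone convergence; since each $\nu_{T(\zeta)}$ is a probability measure, the family of Borel sets $B$ for which $\zeta\mapsto\nu_{T(\zeta)}(B)$ is measurable is then a Dynkin system containing the $\pi$-system of open sets, hence is the whole Borel $\sigma$-algebra. (By Lemma~\ref{lem:intFK}, $T(\zeta)\in\exp(\Lc^1)(\Mcal(\zeta),\tau_\zeta)$ for a.e.\ $\zeta$, so $\nu_{T(\zeta)}$ is defined a.e.; on the remaining null set one simply fixes $\nu_{T(\zeta)}$ arbitrarily.)

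So fix a real-valued $\varphi\in C_c^\infty(\Cpx)$ with support in a compact set $K$. By the definition of the Brown measure as the distributional Laplacian of $\tfrac1{2\pi}\log\Delta_{\tau_\zeta}(T(\zeta)-\,\cdot\,)$, one has $\int_\Cpx\varphi\,d\nu_{T(\zeta)}=\tfrac1{2\pi}\int_\Cpx\log\Delta_{\tau_\zeta}(T(\zeta)-\lambda)\,(\nabla^2\varphi)(\lambda)\,dm(\lambda)$, where $\nabla^2$ is the Euclidean Laplacian and $m$ is Lebesgue measure on $\Cpx$. The key device is to regularize: put $u_{\zeta,n}(\lambda)=\tfrac12\tau_\zeta(\log(|T(\zeta)-\lambda|^2+\tfrac1n))$, and observe that $u_{\zeta,n}(\lambda)\downarrow\log\Delta_{\tau_\zeta}(T(\zeta)-\lambda)$ as $n\to\infty$, by monotone convergence for $\tau_\zeta$ since $|T(\zeta)-\lambda|^2+\tfrac1n\downarrow|T(\zeta)-\lambda|^2$. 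I would then verify that $(\zeta,\lambda)\mapsto u_{\zeta,n}(\lambda)$ is jointly measurable. Since $\lambda I$ is diagonal and the operations thread through the decomposition, $T-\lambda$ is decomposable with fibres $T(\zeta)-\lambda$, and $|T-\lambda|$ decomposes with fibres $|T(\zeta)-\lambda|$ (Nussbaum; cf.~\eqref{eq:|T|}); hence, via Proposition~\ref{prop:Tsa}, the positive operator $\log(n|T-\lambda|^2+1)$ decomposes with fibres $\log(n|T(\zeta)-\lambda|^2+1)$, and it lies in $\Lc^1(\Mcal,\tau)$ because it is dominated by $\log 2+2\log^+|T-\lambda|$ and $T-\lambda\in\exp(\Lc^1)(\Mcal,\tau)$. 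So Lemma~\ref{lem:L1} shows that, for each fixed $\lambda$, the map $\zeta\mapsto u_{\zeta,n}(\lambda)$ is measurable. For fixed $\zeta$, continuity of $\lambda\mapsto u_{\zeta,n}(\lambda)$ is precisely Lemma~\ref{lem:cont}: writing $|T(\zeta)-\lambda|^2+\tfrac1n=\tfrac1n(|\sqrt n\,T(\zeta)-\sqrt n\,\lambda|^2+1)$ reduces it to continuity of $\mu\mapsto\Delta_{\tau_\zeta}(|\sqrt n\,T(\zeta)-\mu|^2+1)$, a function bounded below by $1$, so the logarithm is harmless. A function that is measurable in one variable and continuous in the other is jointly measurable, so $u_{\cdot,n}(\cdot)$ is.

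With joint measurability in hand, Fubini--Tonelli gives that $\zeta\mapsto\int_\Cpx u_{\zeta,n}(\lambda)\,(\nabla^2\varphi)(\lambda)\,dm(\lambda)$ is measurable (the integrand is supported in $K$, where $|u_{\zeta,n}|$ is bounded a.e.\ by a constant depending only on $n$, $K$ and $\tau_\zeta(\log^+|T(\zeta)|)<\infty$). Then I would let $n\to\infty$: on $K$ one has $\log\Delta_{\tau_\zeta}(T(\zeta)-\lambda)\le u_{\zeta,n}(\lambda)\le u_{\zeta,1}(\lambda)$ with $u_{\zeta,1}$ bounded there, while $\log\Delta_{\tau_\zeta}(T(\zeta)-\lambda)=\int\log|z-\lambda|\,d\nu_{T(\zeta)}(z)$ is integrable over $K$ since $\int\log^+|z|\,d\nu_{T(\zeta)}(z)<\infty$ for a.e.\ $\zeta$ by~\eqref{eq:char1}; so dominated convergence yields $\int_\Cpx u_{\zeta,n}(\lambda)(\nabla^2\varphi)(\lambda)\,dm(\lambda)\to 2\pi\int_\Cpx\varphi\,d\nu_{T(\zeta)}$ for a.e.\ $\zeta$, whence $\zeta\mapsto\int\varphi\,d\nu_{T(\zeta)}$ is an a.e.\ limit of measurable functions, hence measurable. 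The main obstacle is the joint-measurability step: it is crucial that the regularization $+\tfrac1n$ makes $\lambda\mapsto u_{\zeta,n}(\lambda)$ genuinely \emph{continuous} (so that the Carath\'eodory-type criterion for joint measurability applies), because the unregularized potential $\log\Delta_{\tau_\zeta}(T(\zeta)-\lambda)$ is merely upper semicontinuous in $\lambda$, and separate measurability in the two variables does not by itself force joint measurability.
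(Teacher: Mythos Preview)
Your argument is correct and tracks the paper's proof closely: both reduce to smooth test functions, invoke the Laplacian formula for Brown measure, regularize $\log|T(\zeta)-\lambda|$ to $\tfrac12\log(|T(\zeta)-\lambda|^2+\eps)$, use Lemma~\ref{lem:cont} for continuity in $\lambda$, and rely on measurability of the decomposition for measurability in $\zeta$ at each fixed $\lambda$. The one substantive technical difference is in how you pass from ``measurable in $\zeta$ for each $\lambda$'' to ``the $\lambda$-integral is measurable in $\zeta$'': you invoke the Carath\'eodory-type criterion (separately measurable in $\zeta$, continuous in $\lambda$ $\Rightarrow$ jointly measurable) and then Fubini--Tonelli, followed by dominated convergence as $n\to\infty$; the paper instead uses the continuity in $\lambda$ to replace the integral over the rectangle by a limit of Riemann sums over the lattice $\tfrac1k(\Ints+i\Ints)$, so that the Brown measure of the rectangle becomes a triple pointwise limit of finite sums of functions measurable in $\zeta$. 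Your route is the standard measure-theoretic one and arguably cleaner; the paper's Riemann-sum device avoids quoting the joint-measurability lemma at the price of one more limit. (A minor imprecision: your bound $\log 2+2\log^+|T-\lambda|$ for $\log(n|T-\lambda|^2+1)$ should carry an $n$-dependent constant, but this is harmless.)
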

\begin{proof}
By Lemma~\ref{lem:intFK}, $T(\zeta)\in\exp(\Lc^1)(\Mcal(\zeta),\tau_\zeta)$ for almost all $\zeta$, and we will confine ourselves to such $\zeta$.
It will suffice to prove measurability when $B$ is an open, bounded rectangle in $\Cpx$, for the collection of such sets generates the Borel $\sigma$-algebra.
Fix a sequence $\{f_n\}_{n\geq0}$ of Schwartz functions having support in $B$ and increasing pointwise to the characteristic function of $B$.
Then by the Monotone Convergence Theorem, we have
\[
\nu_{T(\zeta)}(B)=\lim_{n\to\infty}\int_{\Cpx}f_n(\lambda)d\nu_{T(\zeta)}(\lambda).
\]
By definition of the Brown measure, we have
\[
\int_{\Cpx}f_n(\lambda)d\nu_{T(\zeta)}(\lambda)=\frac1{2\pi}\int_{\Cpx}\tau_{\zeta}(\log(|T(\zeta)-\lambda|))\nabla^2f_n(\lambda)\,d\lambda,
\]
where $d\lambda$ means Lebesgue measure on $\Cpx$.
Note that $\tau_\zeta(\log|T(\zeta)-\lambda|)$ is bounded above for $\lambda$ in compact subsets of $\Cpx$.
Fixing $n$ for the moment and writing
$\nabla^2f_n(\lambda)=h_{1}-h_{2},$ where $h_{1}$ and $h_{2}$ are positive Schwartz functions,
it follows that both of the integrals
\[
\int_{\Cpx}\tau_{\zeta}(\log(|T(\zeta)-\lambda|))h_1(\lambda)\,d\lambda\quad\text{and}\quad
\int_{\Cpx}\tau_{\zeta}(\log(|T(\zeta)-\lambda|))h_2(\lambda)\,d\lambda
\]
are finite.
It follows from the Monotone Convergence Theorem that
\begin{align*}
\int_{\Cpx}\tau_{\zeta}(\log(|T(\zeta)-\lambda|))h_{1}(\lambda)\,d\lambda
&=\frac12\lim_{m\to\infty}\int_{\Cpx}\tau_{\zeta}(\log(|T(\zeta)-\lambda|^2+\frac1{m^2}))h_{1}(\lambda)\,d\lambda, \\[1ex]
\int_{\Cpx}\tau_{\zeta}(\log(|T(\zeta)-\lambda|))h_{2}(\lambda)\,d\lambda
&=\frac12\lim_{m\to\infty}\int_{\Cpx}\tau_{\zeta}(\log(|T(\zeta)-\lambda|^2+\frac1{m^2}))h_{2}(\lambda)\,d\lambda.
\end{align*}
Thus, we have
\[
\int_{\Cpx}\tau_{\zeta}(\log(|T(\zeta)-\lambda|))\nabla^2f_n(\lambda)\,d\lambda
=\frac12\lim_{m\to\infty}\int_{\Cpx}\tau_{\zeta}(\log(|T(\zeta)-\lambda|^2+\frac1{m^2}))\nabla^2f_n(\lambda)\,d\lambda
\]
and, since each $\nabla^2f_n$ vanishes outside of the rectangle $B$,
\[
\nu_{T(\zeta)}(B)=\frac1{4\pi}\lim_{n\to\infty}\lim_{m\to\infty}\int_B\tau_{\zeta}(\log(|T(\zeta)-\lambda|^2+\frac1{m^2}))\nabla^2f_n(\lambda)d\lambda.
\]

By Lemma~\ref{lem:cont}, the mapping
\[
\lambda\mapsto \tau_{\zeta}(\log(|T(\zeta)-\lambda|^2+\frac1{m^2}))\nabla^2f_n(\lambda)
\]
is continuous and, therefore, is Riemann integrable over $B$.
Thus,
\begin{multline*}
\int_B\tau_{\zeta}(\log(|T(\zeta)-\lambda|^2+\frac1{m^2}))\nabla^2f_n(\lambda)d\lambda \\
=\lim_{k\to\infty}\frac1{k^2}\sum_{\lambda\in\frac1k(\Ints+i\Ints)}\tau_{\zeta}(\log(|T(\zeta)-\lambda|^2+\frac1{m^2}))\nabla^2f_n(\lambda),
\end{multline*}
where the sum is actually finite.
Thus,
\[
\nu_{T(\zeta)}(B)=\frac1{4\pi}\lim_{n\to\infty}\lim_{m\to\infty}\lim_{k\to\infty}\frac1{k^2}
\sum_{\lambda\in\frac1k(\Ints+i\Ints)}\tau_{\zeta}(\log(|T(\zeta)-\lambda|^2+\frac1{m^2}))\nabla^2f_n(\lambda).
\]
Because the decompositions of $T$ and of $\tau$ are measurable, for each fixed $\lambda$ the mapping
\[
\zeta\to\tau_{\zeta}(\log(|T(\zeta)-\lambda|^2+\frac1{m^2}))
\]
is measurable.
Since the pointwise limit of the sequence of measurable functions is again a measurable function, the lemma is proved.
\end{proof}

Here is the main theorem about decomposition of Brown measure.
\begin{thm}\label{thm:decompBrown}
Let $T\in\exp(\Lc^1)(\Mcal,\tau)$ and write 
\[
T=\int_\measspace^\oplus T(\zeta)\,d\meas(\zeta).
\]
Then the Brown measure $\nu_T$ of $T$ is given by
\begin{equation}\label{eq:rhofornuT}
\nu_T(B)=\int_\measspace\nu_{T(\zeta)}(B)\,d\meas(\zeta)
\end{equation}
for every Borel subset $B\subseteq\Cpx$.
\end{thm}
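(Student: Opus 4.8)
The plan is to apply the Haagerup--Schultz characterization of the Brown measure recalled in~\eqref{eq:char1} and~\eqref{eq:char2}. Define the set function
\[
\nu(B):=\int_\measspace\nu_{T(\zeta)}(B)\,d\meas(\zeta),\qquad B\subseteq\Cpx\ \text{Borel}.
\]
This is meaningful: $T(\zeta)\in\exp(\Lc^1)(\Mcal(\zeta),\tau_\zeta)$ for almost every $\zeta$ by Lemma~\ref{lem:intFK}, and $\zeta\mapsto\nu_{T(\zeta)}(B)$ is measurable and bounded by $1$ by Lemma~\ref{lem:numeas}. Since each $\nu_{T(\zeta)}$ is a probability measure and $\meas$ has been normalized to one, $\nu(\Cpx)=1$, and countable additivity of $\nu$ follows from that of the $\nu_{T(\zeta)}$ via the monotone convergence theorem; so $\nu$ is a probability measure. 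By the uniqueness clause of the characterization it therefore suffices to verify that $\nu$ satisfies~\eqref{eq:char1} and~\eqref{eq:char2}, for then $\nu=\nu_T$ and~\eqref{eq:rhofornuT} holds.

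I would first dispose of~\eqref{eq:char1}. Here I would use the generalized Weyl inequality for Brown measure, namely $\int_\Cpx\log^+|z|\,d\nu_S(z)\le\tau(\log^+|S|)$ for every $S\in\exp(\Lc^1)$ (a consequence of the results of~\cite{HS07}). Applying this with $S=T(\zeta)$ and using Tonelli's theorem together with~\eqref{eq:intlog+bd} of Lemma~\ref{lem:intFK}, one gets
\[
\int_\Cpx\log^+|z|\,d\nu(z)=\int_\measspace\Big(\int_\Cpx\log^+|z|\,d\nu_{T(\zeta)}(z)\Big)\,d\meas(\zeta)\le\int_\measspace\tau_\zeta\big(\log^+|T(\zeta)|\big)\,d\meas(\zeta)=\tau(\log^+|T|)<\infty.
\]

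The substance of the proof is~\eqref{eq:char2}. Fix $\lambda\in\Cpx$. Since $\exp(\Lc^1)(\Mcal,\tau)$ is an algebra containing $\Mcal$, the operator $T-\lambda$ lies in $\exp(\Lc^1)(\Mcal,\tau)$, is decomposable, and satisfies $(T-\lambda)(\zeta)=T(\zeta)-\lambda$. Applying the fiberwise identity~\eqref{eq:char2} to each $T(\zeta)$ gives $\int_\Cpx\log|z-\lambda|\,d\nu_{T(\zeta)}(z)=\log\Delta_{\tau_\zeta}(T(\zeta)-\lambda)$, while~\eqref{eq:intFK} of Lemma~\ref{lem:intFK}, applied to $T-\lambda$, gives $\int_\measspace\log\Delta_{\tau_\zeta}(T(\zeta)-\lambda)\,d\meas(\zeta)=\log\Delta_\tau(T-\lambda)$. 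So~\eqref{eq:char2} for $\nu$ reduces to the interchange
\[
\int_\Cpx\log|z-\lambda|\,d\nu(z)=\int_\measspace\Big(\int_\Cpx\log|z-\lambda|\,d\nu_{T(\zeta)}(z)\Big)\,d\meas(\zeta),
\]
and this is the step I expect to be the main obstacle, precisely because $\log|z-\lambda|$ has no definite sign.

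To handle it I would split $\log=\log^+-\log^-$. Tonelli applies to the $\log^+$ part, whose total integral against $\nu$ is finite by the estimate of the second paragraph applied to $T-\lambda$ in place of $T$. For the $\log^-$ part I would use, for each $\zeta$, the identity $\int_\Cpx\log^-|z-\lambda|\,d\nu_{T(\zeta)}(z)=\int_\Cpx\log^+|z-\lambda|\,d\nu_{T(\zeta)}(z)-\log\Delta_{\tau_\zeta}(T(\zeta)-\lambda)$, whose right-hand side is $\meas$-integrable in $\zeta$ (the first term by the $\log^+$ estimate, the second by Lemma~\ref{lem:intFK}, its positive part being controlled again by the Weyl inequality), and integrate this identity over $\zeta$; this produces the interchange. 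If $\log\Delta_\tau(T-\lambda)=-\infty$, the asserted identity holds trivially in $[-\infty,\infty)$ because the $\log^-$ integral against $\nu$ then diverges; otherwise every integral above is finite and the interchange is legitimate, giving $\int_\Cpx\log|z-\lambda|\,d\nu(z)=\log\Delta_\tau(T-\lambda)$. With~\eqref{eq:char1} and~\eqref{eq:char2} in hand, uniqueness in the Haagerup--Schultz characterization forces $\nu=\nu_T$, which is~\eqref{eq:rhofornuT}. If one prefers to avoid invoking the generalized Weyl inequality, the finiteness of $\int_\Cpx\log^+|z|\,d\nu(z)$ can instead be extracted, after~\eqref{eq:char2} is established, from the identity $\int_0^{2\pi}\log|z-Re^{i\theta}|\,\tfrac{d\theta}{2\pi}=\log\max(|z|,R)$, Fubini's theorem, Lemma~\ref{lem:intFK} applied to $T-Re^{i\theta}$, and the validity of~\eqref{eq:char1} for $\nu_T$ itself.
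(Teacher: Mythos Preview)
Your argument is correct and follows the same overall strategy as the paper: define the candidate measure by the right-hand side of~\eqref{eq:rhofornuT}, verify the two characterizing conditions~\eqref{eq:char1}--\eqref{eq:char2} of Haagerup--Schultz, and conclude by uniqueness. For~\eqref{eq:char1} both you and the paper invoke the Weyl-type inequality (Lemma~2.20 of~\cite{HS07}) together with~\eqref{eq:intlog+bd}. The only real difference is in how the Fubini interchange for~\eqref{eq:char2} is justified: the paper regularizes by $\log(|w-\lambda|+\eps)$, which is bounded below and allows two clean applications of monotone convergence, whereas you split $\log=\log^+-\log^-$ and handle the two pieces separately, with a case distinction on whether $\log\Delta_\tau(T-\lambda)=-\infty$. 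Both routes are standard and sound; the paper's avoids the case analysis, while yours makes the role of the Weyl inequality for $T-\lambda$ explicit.

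One small remark on your closing alternative: as written, your main proof of~\eqref{eq:char2} already uses the Weyl inequality (to control the $\log^+$ piece for $T-\lambda$), so the suggestion to deduce~\eqref{eq:char1} \emph{after} establishing~\eqref{eq:char2} does not actually sidestep that inequality. If you wanted a Weyl-free route, you would need to justify the interchange in~\eqref{eq:char2} by the $\eps$-regularization method instead, which uses only monotone convergence and Lemma~\ref{lem:intFK}.
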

\begin{proof}
By Lemma~\ref{lem:numeas}, the right-hand-side of~\eqref{eq:rhofornuT} defines a probability measure on $\Cpx$, which we will denote by the symbol $\rho$.
We will show that $\rho$ satisfies
\begin{gather}
\int_{\Cpx}\log^+|z|\,d\rho(z)<\infty \label{eq:rhochar1} \\
\int_{\Cpx}\log|z-\lambda|\,d\rho(z)=\log\Delta_\tau(T-\lambda)\qquad(\lambda\in\Cpx). \label{eq:rhochar2}
\end{gather}
From the uniqueness property
of Brown measure expressed with Equations~\eqref{eq:char1} and \eqref{eq:char2}, this will imply $\rho=\nu_T$.

To prove~\eqref{eq:rhochar1},
let $f_n$ be an increasing sequence of simple functions on $\Cpx$, each taking only finitely many values, that converges pointwise
to the function $w\mapsto\log^+(w)$.
For each $n$, we have
\begin{equation}\label{eq:fn}
\int_\Cpx f_n(w)\,d\rho(w)=\int_\measspace\int_\Cpx f_n(w)\,d\nu_{T(\zeta)}(w)\,d\meas(\zeta).
\end{equation}
Applying the Monotone Convergence Theorem, we get
\[
\int_\Cpx\log^+|w|\,d\rho(w)=\int_\measspace\int_\Cpx\log^+(w)\,d\nu_{T(\zeta)}(w)\,d\meas(\zeta).
\]
By Lemma~2.20 of~\cite{HS07}, for each $\zeta$, we have
\begin{equation*}
\int_\Cpx\log^+(|w|)\,d\nu_{T(\zeta)}(w)\le\tau_\zeta\big(\log^+(|T(\zeta)|)\big).
\end{equation*}
Since $T\in\exp(\Lc^1)(\Mcal,\tau)$, using Lemma~\ref{lem:intFK}, we have
\begin{equation*}
\int_\measspace\tau_\zeta\big(\log^+(|T(\zeta)|)\big)\,d\meas(\zeta)<\infty.
\end{equation*}
This implies~\eqref{eq:rhochar1}.

Now fix $\lambda\in\Cpx$ and $\eps>0$ and let $(f_n)_{n=1}^\infty$ be an increasing sequence of simple Borel measurable functions on $\Cpx$,
each taking only finitely many values,
that converges pointwise to the function $w\mapsto\log(|w-\lambda|+\eps)$.
Again we have~\eqref{eq:fn}.
Using the Monotone Convergence Theorem and taking $n\to\infty$ we get
\begin{equation*}
\int_\Cpx\log(|w-\lambda|+\eps)\,d\rho(w)=\int_\measspace\int_\Cpx\log(|w-\lambda|+\eps)\,d\nu_{T(\zeta)}(w)\,d\meas(\zeta).
\end{equation*}
Using~\eqref{eq:rhochar1}, we see that the left-hand-side above is not $+\infty$.
Thus, letting $\eps\to0$ and using the Monotone Convergence Theorem, we get
\begin{multline*}
\int_\Cpx\log(|w-\lambda|)\,d\rho(w)=\int_\measspace\int_\Cpx\log(|w-\lambda|)\,d\nu_{T(\zeta)}(w)\,d\meas(\zeta) \\
=\int_\measspace\log\Delta_{\tau_\zeta}(T(\zeta)-\lambda)\,d\meas(\zeta).
\end{multline*}
From~\eqref{eq:intFK} of Lemma~\ref{lem:intFK}, we get~\eqref{eq:rhochar2}.
\end{proof}

\end{document}